\renewcommand{\epsilon}{\varepsilon}
\newcommand{\kahler}{K\"ahler }
\newcommand{\horman}{H\"ormander }
\newcommand{\PP}{{\mathbb P}}
\newcommand{\R}{{\mathbb R}}
\newcommand{\C}{{\mathbb C}}
\newcommand{\Z}{{\mathbb Z}}
\newcommand{\vol}{{\operatorname{Vol}}}
\newcommand{\Pic}{{\operatorname{Pic}}}
\newcommand{\Ric}{{\operatorname{Ric}}}
\renewcommand{\phi}{\varphi}
\newcommand{\hcal}{\mathcal{H}}
\newcommand{\ocal}{\mathcal{O}}
\newtheorem{thm}{Theorem}[section]
\newtheorem{theorem}{{Theorem}}[section]
\newtheorem{cor}[theorem]{{Corollary}}
\newtheorem{lem}[theorem]{{Lemma}}
\newtheorem{prop}[theorem]{{Proposition}}
\newenvironment{rem}{\medskip\noindent{\it Remark:\/} }{\medskip}
\newtheorem{nota}[thm]{Notation}
\theoremstyle{definition}
\numberwithin{equation}{section}
\def \C {\mathbb C}
\def \Z {\mathbb Z}
\def \R {\mathbb R}
\def \P {\mathbb P}
\def \p {\partial}
\def \bp {\bar{\partial}}
\def \pz {\partial_z}
\def \bpz {\bar{\partial}_z}
\def \py {\partial_y}
\def \bpy {\bar{\partial}_y}
\title[Projective Line bundle and K-stability]{projective embedding of pairs and logarithmic K-stability}
\author{Jingzhou Sun}
\thanks{The author is partially supported by the Shantou University Start-up funds for Scientific Research}
\address{Department of Mathematics, Shantou University, Shantou City, Guangdong Province 515063, China}
\email{jzsun@stu.edu.cn}
\begin{document}

\begin{abstract}
Let $\hat{L}$ be the projective completion of an ample line bundle $L$ over $D$, a smooth projective manifold. Hwang-Singer \cite{HwangS} have constructed complete CSCK metric on $\hat{L}\backslash D$. When the corresponding \kahler form is in the cohomology class of a rational divisor $A$ and when $L$ has negative CSCK metric on $D$, we show that the Kodaira embedding induced by orthonormal basis of the Bergman space of $kA$ is almost balanced. As a corollary, $(\hat{L},D,cA,0)$ is K-semistable.
\end{abstract}

\maketitle


\section{Introduction}
This is a continuation of our joint work \cite{ssun} with Song Sun about the K-stability of polarized log pairs of \kahler manifolds $(X,D,L)$ with complete constant scalar curvature \kahler (CSCK) metrics on the complement $X\backslash D$, where $D$ is a divisor of $X$ and $L$ is an ample line bundle over $X$. We refer the readers to \cite{ssun} for the backgrounds and motivations of this research, and also for the references related to this research. In \cite{ssun}, we have shown that a polarized log Riemann surface with complete constant negative scalar curvature \kahler metric is almost balanced for large tensor power of the line bundle, hence log K-semistable. 

This article tries to generalize this result to higher dimension. More specifically, in this article we are able to do it in the line bundle case $(\hat{L},D)$, namely when $X$ is the total space $\hat{L}$ of the projective completion of an ample line bundle $L$ of $D$, with a special polarization $A$ that admits a circle-invariant complete negative CSCK metric on the complement $\hat{L}\backslash D$, as constructed by Hwang-Singer in \cite{HwangS} using Calabi ansatz. We will explain more details about this construction in section  \ref{sec2}.


In order to explain our main result,  we have to recall again some known facts and fix some notation.  Let $V$ be a subvariety of $\C\P^N$ and $W$ a subvariety of $V$. For $\lambda\in [0, 1]$ we define the \emph{$\lambda$-center of mass} of $(V, W)$ to be 

$$\mu(V, W, \lambda)=\lambda \int_V \frac{ZZ^*}{|Z|^2} d\mu_{FS}+(1-\lambda)\int_W \frac{ZZ^*}{|Z|^2}d\mu_{FS}-\frac{\lambda Vol(V)+(1-\lambda)Vol(W)}{N+1}Id $$
where $[Z]\in \C\P^N$ is viewed as a column vector, and the volume is calculated with respect to the induced Fubini-Study metric. Notice $\mu$ always takes value in $Lie(SU(N+1))$; indeed, by general theory $\mu$ can be viewed as the moment map for the action of $SU(N+1;\C)$ on a certain \emph{Chow variety}. For $B\in Lie(SU(N+1))$  we write $\|B\|_2:=\sqrt{Tr BB^*}$.

A pair $(V, W)$ embedded in $\C\P^N$ with vanishing $\lambda$-center of mass is called a \emph{$\lambda$-balanced embedding}. We say $(V, W)$ is  \emph{$\lambda$-Chow stable} if there is an $A\in SL(N+1;\C)$ such that $(A.V, A.W)$ is $\lambda$-balanced.  and we say $(V, W)$ is \emph{$\lambda$-Chow semistable} if the infimum balancing energy 
$$E(V, W, \lambda):=\inf_{A\in SL(N+1;\C)} \|\mu(A. V, A. W, \lambda)\|_2$$
vanishes. When $\lambda=1$ the subvariety $W$ can be ignored and this reduces to the standard notion of Chow (semi)-stability. 

Now going back to the situation of a polarized manifold $(X, L)$ and a smooth divisor $D$. We say $(X, D, L)$ is $\lambda$-\emph{almost asymptotically Chow stable} if for all $k$ sufficiently large, under the projective embedding of $(X, D)$ induced by sections of $H^0(X, L^k)$ we have $E(V, W, \lambda)=o(k^{-1})$. By \cite{ssun} if $(X, D, L)$ is $\lambda$-amost asymptotically Chow stable then $(X, D, L, \beta)$ is K-semistable for $\beta=\frac{3\lambda-2}{\lambda}.$  We will not explicitly make use of the notion of (logarithmic) K-(semi)stability in this article, so we will not elaborate on the definition and we refer the readers to \cite{ssun}.

In our setting $D$ is a smooth \kahler manifold of dimension $n$. $L$ is an ample line bundle over $D$ with a Hermitian metric whose curvature defines a \kahler form associated with a negative CSCK metric. We denote by $\omega$ the  \kahler form that defines the complete circle-invariant negative CSCK metric on $\hat{L}\backslash D$, and by $h$ the singular Hermitian metric on $A$ over $\hat{L}$ whose curvature current is $-i\omega$.
For any positive integer, we denote by $\mathcal H_k$ the Bergman space of $L^2$ integrable sections in $H^0(\hat{L},kA)$ under the metric $h^k$ and the volume form $\omega^{n+1}$.

 For $k$ large, we have an embedding $\Phi_k: \hat{L}\rightarrow\P\mathcal H_k^*$. A choice of an orthonormal basis of $\mathcal H_k$ determines a Hermitian isomorphism of $\P\mathcal H_k^*$ with $\C\P^{N_k}$, up to an  $U(N_k+1)$ action, where $N_k+1=\dim \mathcal H_k$. In particular, the $L^2$ norm $\|\mu(\hat{L}, D, \lambda)\|_2$ is independent of the choice of orthonormal basis.   The following is our main result

\begin{thm} \label{thm1-1}
$(\hat{L}, D, A)$ is $\frac{2}{3}$-almost asymptotically Chow stable. More precisely, we have 
$$\|\mu(\Phi_k(\hat{L}), \Phi_k(D), \frac 23)\|^2_2=O(k^{-3/2+n}(\log k)^{121}). $$
\end{thm}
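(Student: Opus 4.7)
The plan is to exploit the fiberwise $S^1$-action on $\hat{L}$ to reduce the problem to a one-dimensional peak-section analysis per circle-weight, coupled to an on-$D$ Bergman kernel expansion driven by the negative CSCK metric on $L$. This is the natural higher-dimensional extension of the Riemann surface strategy carried out in \cite{ssun}.

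\emph{Step 1: $S^1$-weight decomposition of $\mathcal{H}_k$.} Both $\omega$ and $h$ are invariant under fiber rotation, so $\mathcal{H}_k = \bigoplus_j \mathcal{H}_{k,j}$ splits into $S^1$-weight subspaces. In a local fiber coordinate $w$, a weight-$j$ element of $\mathcal{H}_k$ has the shape $s(z)\,w^j$; in particular all sections with $j>0$ vanish along $D=\{w=0\}$. Each $\mathcal{H}_{k,j}$ thus identifies with a space of holomorphic sections of an appropriate twist of $L^k$ on $D$, endowed with a weighted $L^2$ inner product obtained by integrating over the fiber against the Calabi-ansatz momentum profile arising in the Hwang-Singer construction. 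Fix an orthonormal basis $\{s_I\}$ of $\mathcal{H}_k$ compatible with this splitting.

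\emph{Step 2: Block structure of the center of mass.} Writing $K_k(z,z)=\sum_I |s_I(z)|^2_{h^k}$ for the Bergman kernel on $\hat{L}$, the entries of $\mu(\Phi_k(\hat{L}),\Phi_k(D),2/3)$ take the form
$$M_{IJ} = \tfrac 23 \int_{\hat{L}} \frac{s_I \overline{s_J}}{K_k}\,d\mu_{FS} + \tfrac 13 \int_D \frac{s_I \overline{s_J}}{K_k|_D}\,d\mu_{FS} - c_k\,\delta_{IJ}.$$
Since only the weight-$0$ sections survive restriction to $D$, the on-$D$ term involves only the weight-$0$ block. By $S^1$-invariance and the $S^1$-fixity of $D$, the full matrix $M$ is block-diagonal in the weight decomposition; one can further diagonalize within each block by an appropriate choice of on-$D$ basis, so the task reduces to estimating each diagonal entry plus the deviation from a scalar multiple of the identity within each block.

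\emph{Step 3: Bergman asymptotics.} On compact subsets of $\hat{L}\setminus D$ the Tian--Yau--Zelditch expansion applies, and the CSCK property of $\omega$ kills the subleading correction. Near $D$ the Calabi-ansatz form of $\omega$ permits a fiberwise peak-section analysis in the spirit of \cite{ssun}: for each weight $j$ the fiber integral concentrates at a radius determined by $j/k$ with exponentially decaying tails. Performing the fiber integration and summing over $j$ reduces the computation to a weighted sum of Bergman-kernel asymptotics for $(D,L^{k+j})$ equipped with its negative CSCK metric, where the negativity of scalar curvature produces the expected cancellations at the relevant order. Assembling the bulk contribution with the $D$-integral and the normalizing trace $c_k$, the expected cancellations conspire precisely at $\lambda=2/3$.

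\emph{Main obstacle and error accounting.} The technical core is the transition zone near $D$: neither the interior TYZ expansion nor a pure boundary model is valid there. One needs a uniform fiberwise peak-section estimate valid across all relevant weights $j$, with explicit error terms summable against the Bergman kernel on $D$. I would separate the bulk from the boundary layer by a cutoff at distance of order $(\log k)^C/\sqrt{k}$ from $D$; the exponent $-3/2+n$ in the main bound reflects the genuine decay rate produced by negative CSCK on $D$, while the polylogarithmic factor $(\log k)^{121}$ absorbs both the Calabi-ansatz boundary-layer errors and the polynomial-in-$j$ weights accumulated when summing over the $S^1$-decomposition.
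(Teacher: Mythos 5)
Your outline correctly identifies the structural framework the paper uses: the $S^1$-weight decomposition $\mathcal{H}_k=\bigoplus_a H^0(D,kA-aD)$, the block structure this induces on the center of mass, and a three-region split into bulk, neck, and near-$D$. But the proposal stops exactly where the difficulty begins. What you call the ``technical core'' in the transition zone is not an error term that can be labelled $(\log k)^{C}$ and set aside; it \emph{is} the proof, and your sketch contains no mechanism for producing the estimate.

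Three things in particular are missing and do not follow from what you have written. First, the fiber integrals $I_a=\int_\C |z|^{2a-2}(1+\tau)^n e^{-kg}\phi(\tau)\,|dz|^2$ must be controlled by a uniform Laplace-method analysis: one needs not only the leading Gaussian approximation of $I_a$ but the first four derivatives of $\log I_a$ in $a$ (the paper's $c_1,\dots,c_4$), with the precise cancellations $c_3=O(k/a^3)$ and $c_4=O(1/k + k/a^4)$. These control the ratios $I_a/I_{a+c}$ that build the fiberwise Fubini--Study density, and without them the ``mass concentration'' you invoke has no quantitative content. Second, in the neck $a\sim\sqrt{k}$ the fiber integral does not truncate to a few terms; one must pass to the cylinder model $h_a(u)=\sum_{c\in\Z}e^{-kc^2/(2a^2)-cu}$ and use the exact identity $\int_{-\infty}^{\infty}h_a''(u)/h_a(u)^2\,du=2$ (heat equation plus theta-function periodicity), which is what forces the leading constant of $\mu_{a,i}$ to be exactly $1$ rather than some $a$-dependent number. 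Lose this identity and the neck contributes $O(k^{n+1/2})$, destroying the bound entirely. Third, the far-interior cancellation is not a soft consequence of negative CSCK: it requires the explicit identity $\tfrac{c_0}{2}=-\sigma$ matching the scalar-curvature constant with the subleading Riemann--Roch coefficient of $(\vol\hat{L}+\tfrac12\vol D)/\dim\mathcal{H}_k$, proved by comparing $c_1(\hat{L})\cdot A^n$ with $\int \Ric(\omega_\phi)\wedge\omega_\phi^n$. This is what makes $\mu_{a,i}-\tilde c_k=O(k^{-2})$ in the bulk, so that its $O(k^{n+1})$ eigenvalues contribute only $O(k^{n-3})$. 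Your proposal gestures at all three points but supplies none of the mechanisms, so as written it does not yield the stated exponent $k^{n-3/2}$.
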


We remark here that the exponent $121$ suffices for our purpose, but it is far from sharp. It can be improved if necessary.

\

An immediate corollary, using formula 2.2 in \cite{ssun}, is that 

\begin{cor}\label{1.1}
$(\hat{L},D,cA,0)$ is K-semistable.
\end{cor}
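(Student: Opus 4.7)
The corollary is a direct consequence of Theorem 1.1 combined with the implication recalled in the introduction (see formula 2.2 of \cite{ssun}) that $\lambda$-almost asymptotic Chow stability of $(X,D,L)$ implies K-semistability of $(X,D,L,\beta)$ with $\beta=\frac{3\lambda-2}{\lambda}$. The plan is therefore essentially a two-line reduction. First, I would substitute $\lambda=\frac 23$ into the formula $\beta=\frac{3\lambda-2}{\lambda}$ and observe the numerical coincidence
\[
\beta \;=\; \frac{3\cdot \tfrac 23 - 2}{\tfrac 23}\;=\;0,
\]
which explains why the value $\lambda=\tfrac 23$ is precisely the one that yields the \emph{unweighted} ($\beta=0$) notion of logarithmic K-semistability.

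Second, I would invoke Theorem 1.1 to get that $(\hat L, D, A)$ is $\tfrac 23$-almost asymptotically Chow stable, and then quote the implication from \cite{ssun} with $X=\hat L$, $L=A$, $\lambda=\tfrac 23$, $\beta=0$ to conclude that $(\hat L, D, A, 0)$ is K-semistable. Finally, to upgrade from $A$ to $cA$, I would note that logarithmic K-semistability is invariant under positive scaling of the polarization: any test configuration for $(\hat L, D, cA, 0)$ yields, up to the same scaling of the Donaldson--Futaki invariant, a test configuration for $(\hat L, D, A, 0)$, so the semistability statements are equivalent. This finishes the proof of Corollary \ref{1.1}.

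There is essentially no technical obstacle at this stage; all the analytic content has been absorbed into Theorem 1.1 and the reference \cite{ssun}. The only point that requires a sentence of care is pinning down the normalization conventions so that the $c$ in $cA$ is harmless, i.e.\ confirming that the formula $\beta=(3\lambda-2)/\lambda$ from \cite{ssun} is stated for the Chow-stability data produced by Theorem 1.1 and does not depend on the scale of the line bundle. Once that is checked, the corollary follows.
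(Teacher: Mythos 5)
Your proposal is correct and follows essentially the same route as the paper: the paper's own proof also just invokes formula 2.2 of \cite{ssun} together with the estimate of Theorem \ref{thm1-1} (writing out the resulting lower bound on the Futaki invariant, whose limit is $0$), which is exactly the specialization $\lambda=\tfrac23$, $\beta=0$ of the implication you cite. The remark on the harmlessness of the scaling $A\mapsto cA$ is a reasonable extra sentence of care but introduces no new content.
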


It is easy to notice that the estimation in \ref{thm1-1} is very similar to that in the main result in \cite{SunSun}, as they share the same basic ideas. The main difference is that in this article we do not have clean explicit formulas of the coefficients of the power series as in \cite{SunSun}. That of course caused a lot of complexity of calculations, which is inevitable, and the conquest of which is the main purpose of this article. It is interesting to notice that in order to get our main result we do not need to estimate the asymptotic of the Bergman kernel near the singularity. But as in the case of log Riemann surface \cite{PunctureD}, we believe that the techniques developed in this article can be used to estimate the Bergman kernel near the singularity $D$. 
\

The structure of this article is as following. We will first quickly review the constructions of the circle-invariant CSCK metrics on the subsets of the total space $\hat{L}$, and introduce our polarization $A$. Then we will do some algebraic calculations in order to calculate the diagonal of the center of mass of $\hat{L}$ and $D$ under the moment map to $Lie(U(N+1))$. Then we will calculate the $L^2$ norm of the center of mass of the log pair, following the basic ideas from \cite{SunSun}, which divides the computation into three components, the ``inside", the ``outside" and the ``neck". We want to emphasize again that the main reason that we can do this is the mass concentration phenomenon, which seems to be due to the negative curvature..

\

\textbf{Acknowledgements.}  The author would like to thank Professor Song Sun for many insightful discussions regarding the topic of this article over a long time. Professor Xiuxiong Chen has shown a lot of interests in and supports for this research, which are greatly appreciated.  The author also want to thank Professor Bernard Shiffman for his continuous and unconditional support.

\section{Circle Invariant CSCK metrics on Line Bundles}\label{sec2}

Let $L\to D$ be an ample line bundle, $n=\dim D$. Let $\hat{L}=L\cup M=\PP(L\oplus \ocal)$ be the projective completion of $L$, with $M$ the divisor at infinity. 
Let $\ocal(1)\to \hat{L}$ denote the natural bundle arises from the projectivelization.

The readers are refered to \cite{HwangS} for details of the momentum construction of the complete circle invariant CSCK metric on $\hat{L}\backslash D$. Here we merely define notations and quickly dive into calculations for our purpose.

Now $\hat{L}\backslash D$ is the total space of the line bundle $p:L^{-1}\to M$. Let $h_M$ be a metric on $L$ with curvature form $i\Theta_{h_M}=\omega_M$, with $\omega_M$ a \kahler form which defined a Riemann metric with constant scalar curvature. Let $t$ be the logarithm of the fibrewise norm function defined by $h_M$ and consider the Calabi ansatz $$\omega=p^*\omega_M+2i\partial\bar{\partial}f(t)$$
with $f$ a function to be chosen.

Let $X$ be the generator of the natural $S^1$ action on $L^{-1}$, normalized so that $exp(2\pi X) = 1$.
Denote by $\tau$ the corresponding moment map determined (up to an additive constant) by
$i_X\omega = −d\tau$. The function $||X||^2_{\omega}$ is constant on the level sets of $\tau$, so there is a function $\phi : I \to (0,\infty)-$to be called the momentum profile of $\omega-$such that $$\phi(\tau)=||X||^2_{\omega}$$
Here $I\subset (-1,\infty)$ is an interval. 

The \kahler form determined by $\phi$ is $\omega_{\phi}=p^*\omega_M+2i\partial\bar{\partial}f(t)=(1+\tau)p^*\omega_M+\phi dt\wedge d^ct$, here we are following the convention used in \cite{HwangS} by defining $d^c=i(\bar{\partial}-\partial$. The corresponding Riemannian metric is denoted by $g_\phi$. 
 
The condition of $\omega_{\phi}$ having constant scalar curvature $c$ is then a second order ODE. And in order for the metric to be extended smoothly through the zero section $M$, the initial data for $\phi$ should be $\phi(0)=0, \phi'(0)=2$. Then $\phi$ is defined by $$\phi(\tau)=\frac{2}{Q(\tau)}(\tau+\int_0^{\tau}(\tau-x)(R(x)-c)Q(x)dx)$$
where $Q(\tau)=(1+\tau)^n$ and $R(\tau)=\frac{S_M}{1+\tau}$, with $S_M$ the constant scalar curvature of Riemannian metric defined by $\omega_M$.

$\phi$ clearly depends on the choice of $c$.
By \cite{HwangS}, there is an unique choise of $c=c_0$ such that $\omega_\phi$ defines a complete metric over the whole $L^{-1}=\hat{L}\backslash D$. And $c_0$ is characterized as the biggest $c$ that makes $\phi(\tau)\geq 0$ for all $\tau\geq 0$. 

The restriction of $g_\phi$ to a fibre is totally geodesic and is given by $$g_{fibre}=\phi(\tau)|\frac{dz}{z}|^2$$
where $z$ is any linear coordinate on the fibre. When $c=c_0$, the smallest positive nonzero root $\tau_0$ of $\phi(\tau)$ is the area of a fibre.

\begin{eqnarray*}
\phi(\tau) &=& \frac{2}{(1+\tau)^n}[\tau+\frac{1}{n(n+1)(n+2)}[(2+n)S_M(-1-(1+n)\tau \\ &+& (1+\tau)^{1+n})+nc(1+(n+2)\tau-(1+\tau)^{n+2}]]
\end{eqnarray*}
Inside the bracket, the term of top degree is $\frac{-c}{(n+2)(n+1)}\tau^{n+2}$. So to make $\phi(\tau)\geq 0$ for $\tau>>0$, we must have $c\leq 0$. 
Let $c=0$, then
 $$\phi(\tau) = \frac{2}{(1+\tau)^n}[\tau+\frac{1}{n(n+1)}S_M\sum_{i=2}^{n+1}\begin{bmatrix}
n+1\\ i
\end{bmatrix}\tau^i]$$
so when $S_M\geq 0$, we always have $\phi(\tau)\geq 0$ for $\tau\geq 0$. Therefore $c_0=0$. And in this case $\phi(\tau)>0$ for all $\tau>0$, so the complete metric will have infinite fibre area.

In this article we will assume $S_M< 0$, so we must have $c_0<0$.

When $n=1$, we have
$$\phi(\tau)=\frac{2\tau}{1+\tau}(-\frac{c}{6}\tau^2+\frac{S_M-c}{2}\tau+1)$$
Clearly, $c_0$ is then a solution of the equation $$\Delta=(\frac{S-c}{2})^2+\frac{4c}{6}=0.$$
Since $c_0$ must make $\phi$ have a positive zero of multiplicity $>1$, we must also have $S_M-c_0<0$. Therefore we have 
$$c_0=S_M-\frac{4}{3}+\frac{2}{3}\sqrt{4-6S_M}$$
and $$\phi_{c_0}(\tau)=\frac{-c_0\tau}{3(1+\tau)}(\tau-\tau_0)^2$$

\begin{theorem}
$$\phi_{c_0}(\tau)=\frac{2\tau}{(1+\tau)^n}(\tau-\tau_0)^2f(\tau)$$
where $\tau_0>0$ and $f$ is a polynomial satisfying $f(\tau)>0$ for $\tau\leq \tau_0$
\end{theorem}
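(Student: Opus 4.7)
The plan is to extract the claimed factorization directly from the explicit polynomial expression for $\phi_c$, together with the extremal characterization of $c_0$. First I would observe that, by the binomial theorem, both $-1-(n+1)\tau+(1+\tau)^{n+1}$ and $1+(n+2)\tau-(1+\tau)^{n+2}$ vanish to order at least $2$ at $\tau=0$, so the bracketed polynomial appearing in the explicit formula for $\phi_c$ factors as $\tau\,S_c(\tau)$, where $S_c$ is a polynomial in $\tau$ of degree $n+1$ with $S_c(0)=1$ and leading coefficient $-c/[(n+1)(n+2)]$ (strictly positive for $c<0$). Consequently $\phi_c(\tau)=\tfrac{2\tau}{(1+\tau)^n}\,S_c(\tau)$, and the theorem reduces to an assertion about the polynomial $S_{c_0}$.

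Next I would use the Hwang--Singer description of $c_0$ as the largest $c$ for which $\phi_c\geq 0$ on $[0,\infty)$, equivalently for which $S_c\geq 0$ there. Since $S_c$ depends linearly on $c$ with $\partial_c S_c(\tau)=-\tfrac{1}{(n+1)(n+2)}\sum_{i=2}^{n+2}\binom{n+2}{i}\tau^{i-1}<0$ for $\tau>0$, the map $c\mapsto \min_{\tau\geq 0}S_c(\tau)$ is strictly decreasing, so by continuity it vanishes precisely at $c=c_0$, the minimum being attained at some smallest positive value $\tau_0$. Because $S_{c_0}\geq 0$ touches zero at the interior point $\tau_0$, one gets $S_{c_0}'(\tau_0)=0$, so $(\tau-\tau_0)^2$ divides $S_{c_0}$ and $S_{c_0}(\tau)=(\tau-\tau_0)^2\,f(\tau)$ for a polynomial $f$ of degree $n-1$.

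It remains to verify $f(\tau)>0$ on $[0,\tau_0]$. Since $\tau_0$ is the smallest positive zero of $S_{c_0}$, one has $S_{c_0}>0$ on $[0,\tau_0)$ and hence $f>0$ on $[0,\tau_0)$. The only delicate point is strict positivity at $\tau=\tau_0$, which is equivalent to $\tau_0$ being a root of $S_{c_0}$ of multiplicity exactly $2$; since the multiplicity is a priori even (as $S_{c_0}\geq 0$ near $\tau_0$), the only alternative is multiplicity $\geq 4$. Ruling this out is the main obstacle. I would handle it by a transversality/dimension count: enforcing $S_c(\tau)=S_c'(\tau)=0$ already uses exactly the two parameters $(c,\tau)$ available in our one-parameter family, whereas demanding the additional vanishing $S_c''(\tau)=0$ is a codimension-one condition that does not occur generically. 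The explicit $n=1$ calculation given in the excerpt, producing $\phi_{c_0}=-\tfrac{c_0}{3}\cdot\tfrac{\tau}{1+\tau}(\tau-\tau_0)^2$ with $-c_0/3>0$, furnishes a concrete consistency check; the same strategy carries over to general $n$ at the cost of heavier algebraic bookkeeping.
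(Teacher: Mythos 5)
Your reduction to showing that $\tau_0$ is a zero of $S_{c_0}$ of multiplicity exactly $2$ is the right structure, and everything up to that point is fine. The gap is exactly where you flag it: the transversality/dimension count does not constitute a proof. Arguing that the extra condition $S_c''(\tau)=0$ ``does not occur generically'' is irrelevant here, because $c_0$ is not a generic parameter value --- it is the \emph{specific} extremal value singled out by the condition $\min_{\tau\geq 0}S_c(\tau)=0$, and nothing prevents a codimension-one coincidence from happening at precisely that value. A genericity heuristic cannot rule out a degenerate root at the one special $c$ you care about; you would need an actual computation.

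The paper closes this gap by a short direct calculation that you should have reached for: writing $\bar\phi(\tau)=\tau S_{c_0}(\tau)$ for the numerator, one differentiates twice and finds
$$\bar\phi''(\tau)=(1+\tau)^{n-1}\bigl(S_M-c_0(1+\tau)\bigr),$$
which is the explicit content of the ODE being solved. For $\tau>-1$ the factor $(1+\tau)^{n-1}$ never vanishes, and the linear factor $S_M-c_0(1+\tau)$ has only a simple zero, so $\bar\phi''$ cannot vanish to order $\geq 2$ at any point of the domain, in particular not at $\tau_0$. Combined with your correct observation that the vanishing order of $\bar\phi$ at $\tau_0$ must be even and $\geq 2$, this pins the order to exactly $2$, which is what you need for $f(\tau_0)>0$. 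In short: replace the heuristic with this two-line differentiation and the argument is complete.
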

\begin{proof}
We already know that the numerator part $\bar{\phi}(\tau)$ of $\phi_{c_0}(\tau)$ vanishes at $\tau_0$ with multiplicity $\geq 2$. When $n=2$, we factorize $\bar{\phi}(\tau)=\lambda\tau (\tau-\tau_0)^2(\tau-\tau_1)$. It is easy to check that $\tau_1<0$. 

In general, we first notice that since $\bar{\phi}(\tau)\geq 0$ for $\tau\geq 0$. So the multiplicity of $\bar{\phi}(\tau)$ at $\tau_0$ must be even.

take the second derivative of $\bar{\phi}$,
$$\bar{\phi}''(\tau)=(1+\tau)^{n-1}(S_M-c_0(1+\tau))$$
It is now clear that $\bar{\phi}''$ can not vanish at $\tau_0$ with multiplicity $>1$. The theorem is now proved.
\end{proof}

We now show that this vanishing order means that the metric is of Poincare type along the fibres.

The relations between $\tau$ and $t$ are:
\begin{eqnarray*}
t&=&\int_{\bar{\tau}}^\tau \frac{dx}{\phi(x)}\\
f(t) &=& \int_{\bar{\tau}}^\tau \frac{xdx}{\phi(x)}\\
\tau &=& f'(t)
\end{eqnarray*}

\begin{nota}
In this article, when we only need a rough picture, we will use the notation $\sim$, which will mean asymptocally, up to a positive scalar multiple, the two sides are the same.
\end{nota}
Now since $\phi(\tau)$ vanishes to the second order at $\tau_0$, we have $t\sim \frac{1}{\tau_0-\tau}$ when $\tau$ approaches $\tau_0$. So $\phi(\tau)\sim t^{-2}$. Therefore $g_{fibre}\sim \frac{1}{(\log |z|^2)}|\frac{dz}{z}|^2$. The right hand side of $\sim$ is just the Poincare metric on the punctured disk.

Because the metric $\omega_\phi$ has finite fibre area, it must have finite mass near $D$. So by El Mir extension theorem \cite{DemaillyComplex}, $\omega_\phi$ extends by $0$ to be a closed $(1,1)-$current over $\hat{L}$. By abuse of notation, we still use $\omega_\phi$ to denote the extended current. We let $\pi:\hat{L}\to D$ denote the projection.
\begin{lem}
The cohomology class of $\omega_\phi$ on $\hat{L}$ is $c_1((1+\tau_0)\pi^*L+\ocal(\tau_0))=c_1((1+\tau_0)[D]-\ocal(1))$.
\end{lem}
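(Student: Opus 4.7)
The plan is to identify $[\omega_\phi]\in H^2(\hat L,\R)$ via the Leray decomposition $H^2(\hat L) = \pi^* H^2(D) \oplus \Z\zeta$, where $\zeta = c_1(\ocal(1))$, writing $[\omega_\phi] = \pi^*\alpha + \beta\zeta$. I would determine $\beta$ and $\alpha$ by pairing against a generic fibre $F$ of $\pi$ and by restricting to the infinity section $M$; both of these cycles lie in the smooth locus $\hat L\setminus D$, so the El Mir extension plays no essential role in the actual computation.

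\medskip

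First, I would evaluate against a fibre $F\cong\CP^1$. The fibre meets $D$ at a single point, and on the complement the restriction $\omega_\phi|_F = \phi(\tau)\,|dz/z|^2$ is the smooth complete fibre metric, whose total area equals $\tau_0$ (the smallest positive zero of $\phi$, as already noted). Since a point is pluripolar, the trivial extension preserves total mass, so $\int_F[\omega_\phi] = \tau_0$. Combined with $\int_F\zeta = 1$ and $\int_F\pi^*\alpha = 0$, this forces $\beta = \tau_0$.

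\medskip

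Next I would restrict to $M$, which sits in the smooth locus of $\omega_\phi$. Since $\tau\equiv 0$ on $M$ and $\phi(0) = 0$, the Calabi ansatz gives $\omega_\phi|_M = p^*\omega_M|_M = \omega_M$, representing $c_1(L)$. For the tautological bundle, at a point $[\ell:0]\in M$ of $\PP(L\oplus\ocal)$ the fibre of $\ocal(-1)$ is the line $\C\ell\subset L_d$, so $\ocal(-1)|_M\cong L$ and $\iota_M^*\zeta = -c_1(L)$. Pulling back $[\omega_\phi] = \pi^*\alpha + \tau_0\zeta$ to $M$ then yields $\alpha - \tau_0 c_1(L) = c_1(L)$, hence $\alpha = (1+\tau_0)c_1(L)$. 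This gives the first form $[\omega_\phi] = (1+\tau_0)\pi^*L + \ocal(\tau_0)$.

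\medskip

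For the alternative expression I would compute $[D]$ directly: the composition $\ocal(-1)\hookrightarrow\pi^*(L\oplus\ocal)\twoheadrightarrow\pi^*L$ is a global section of $\ocal(1)\otimes\pi^*L$ whose zero locus is exactly the zero section $D$, so $[D] = \zeta + \pi^*L$. Substituting $\zeta = [D] - \pi^*L$ into the first formula gives $(1+\tau_0)[D] - \ocal(1)$, matching the second form. The only step requiring any care is the fibre mass identity, but since $F\cap D$ is a single (hence pluripolar) point this is immediate from the standard mass-preservation for trivial extensions.
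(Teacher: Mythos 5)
Your proposal is correct and follows essentially the same route as the paper: decompose $H^{1,1}(\hat L,\R)$ into $\pi^*H^{1,1}(D,\R)\oplus\R\zeta$, pin down the $\zeta$-coefficient via the fibre area $\tau_0$, and determine the $\pi^*$-part by restricting to $M$ using $\omega_\phi|_M=\omega_M$ and $\ocal(1)|_M=L^{-1}$. The only (minor) divergence is in deriving $[D]=\pi^*L+\ocal(1)$: you produce the tautological section of $\ocal(1)\otimes\pi^*L$ vanishing on the zero section, whereas the paper restricts $[D]$ and $\ocal(1)$ to $D$ itself; both are standard and equivalent, and your explicit mention that the pairings are taken against cycles in the smooth locus (so the El Mir extension is harmless) is a welcome clarification the paper leaves implicit.
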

\begin{proof}
We have $\Pic(\hat{L})=\Pic(D)\oplus \Z$, and $H^{1,1}(\hat{L},\R)=H^{1,1}(D,\R)\oplus \Z$. So $[\omega_\phi]=\pi^*\alpha+c_1(\ocal(b))$, where $\alpha\in H^{1,1}(D,\R)$.
Since the fibre area is $\tau_0$, we have $b=\tau_0$. 

We also know that $\omega_\phi|_M=\omega_M\in c_1(L)$ and $\ocal(1)|_M=-L$. So $\alpha=c_1((1+\tau_0)L)$. Then we use the fact that $[D]|_D=L$ and $\ocal(1)|_D=\ocal$ to get that $[D]=\pi^*L+\ocal(1)$.
\end{proof}
We let $A=(1+\tau_0)[D]-\ocal(1)$. Clearly $\tau_0$, depending on $D$ and $L$, may not be rational, which means $c\omega_\phi$ is in the first Chern class of a line bundle.  

In the remaining of this article, we will assume that $\tau_0$ is rational, namely for some integer $c$, $cA$ is a line bundle on $\hat{L}$, which is clearly ample. Then there is a singular metric $h$ on $cA$ with curvature current
$c\omega_\phi$. The choice of $h$ is unique up to a scalar multiplication. 
Then as in one dimension, we consider the Bergman space $\hcal_k\subset H^0(\hat{L},kcA)$, the space of $L^2$ integrable sections with respect to the metric $h$ and the volume form $\frac{\omega_\phi^{n+1}}{(n+1)!}$. We can use $K$ to denote $kc$, but since in the estimations following, we will simply not use the fact that $k$ is an integer, so for simplicity we just safely assume $c=1$.

We denote by $\Phi_k:\hat{L}\to \PP \hcal_k^*$ the Kodaira embedding defined by an orthonormal basis of $\hcal_k$.

\section{Algebraic Calculations}
Write $A_k=kA-D$.
$\dim \hcal_k=h^0(\hat{L},A_k)=\chi(\hat{L},A_k)$.
By Hirzebruch-Riemann-Roch, 
$$\chi(\hat{L},A_k)=\frac{A_k^{n+1}}{(n+1)!}+\frac{1}{2n!}A_k^n\cdot c_1(\hat{L})+lower\quad terms$$

Since $D|_D=L$, we have $$D^{n+1}=L^n$$
where by $L^n$, we mean the self intersection number of $L$ on $D$.
Since $\ocal(1)|_D=\ocal_D$, we have $$D^a\cdot \ocal(1)^b=0$$
for $a>0,b>0, a+b=n+1$. 
Since $\ocal(1)=[M]$ and $\ocal(1)|_M=-L$, we have $$\ocal(1)^{n+1}=(-1)^nL^n$$
Therefore, we have $$A_k^{n+1}=[(k(1+\tau_0)-1)^{n+1}-k^{n+1}]L^n$$

We write $\hat{K}=K_{\hat{L}}$, the canonical bundle. Then $(\hat{K}+D)|_D=K_D$. So 
$\hat{K}|_D=K_D-D$. So $\hat{K}=\pi^*(K_D-L)+\ocal(a)$.
Similarly, we have $\hat{K}|_M=K_M+L$. So we have $a=-2$.
So $$\hat{K}=\pi^*(K_D-L)+\ocal(-2)$$
So $$c_1(\hat{L})=\pi^*(L-K_D)+\ocal(2)$$
\begin{eqnarray*}
A_k^n\cdot c_1(\hat{L})&=&(kA-D)^n\cdot \ocal(2)+(kA-D)^n\cdot \pi^*(L-K_D)\\
&=&2k^n L^n+[(k(1+\tau_0)-1)^n-k^n]L^{n-1}\cdot (L-K_D)
\end{eqnarray*}

So if we write out $\dim \hcal_k$ asymptocally in $k$ as 
$$\dim \hcal_k=\frac{1}{(n+1)!}(a_0k^{n+1}+a_1k^n+O(k^{n-1}))$$
then \begin{eqnarray*}
a_0&=&[(1+\tau_0)^{n+1}-1]L^n\\
a_1&=&-\frac{n+1}{2}[(1+\tau_0)^{n}-1](L^n+L^{n-1}\cdot K_D)
\end{eqnarray*}

We then calculate the volue of $\Phi_k(\hat{L})$ and $\Phi_k(D)$ as subvarieties in $\PP \hcal_k^*$. We have 
\begin{eqnarray*}
\vol \hat{L}&=&\frac{1}{(n+1)!}(\Phi_k^*\ocal(1))^{n+1}=\frac{A_k^{n+1}}{(n+1)!}\\
&=&[(k(1+\tau_0)-1)^{n+1}-k^{n+1}]\frac{L^n}{(n+1)!} 
\end{eqnarray*}
and \begin{eqnarray*}
\vol D&=&\frac{(\Phi_k^*\ocal(1))^{n}}{n!}=\frac{(A_k|_D)^{n}}{n!}\\
&=&(k(1+\tau_0)-1)^{n}\frac{L^n}{n!}
\end{eqnarray*}
So
$$\vol \hat{L}+\frac{1}{2}\vol D=\frac{1}{(n+1)!}(b_0K^{n+1}+b_1k^n+O(k^{n-1}))$$
where \begin{eqnarray*}
b_0&=&[(1+\tau_0)^{n+1}-1]L^n\\
b_1&=&-\frac{n+1}{2}(1+\tau_0)^{n}L^n
\end{eqnarray*}

So asymptotically
$$\frac{\vol \hat{L}+\frac{1}{2}\vol D}{\dim \hcal_k}=1+\sigma \frac{1}{k}+O(\frac{1}{k^2})$$
where $\sigma=\frac{b_1-a_1}{a_0}$.
We also have $$b_1-a_1=\frac{n+1}{2}[((1+\tau_0)^n-1)L^{n-1}\cdot K_D-L^n)$$

On the other hand, we have $$\int_{L^{-1}} c_0\omega_\phi^{n+1}=(n+1)\int_{L^{-1}} \Ric(\omega_\phi)\omega_\phi^{n}$$

Since the whole Ricci current $\Ric(\omega_\phi)=\Ric(\omega_\phi)|_{L^{-1}}+[D]$, we have $$c_1(\hat{L})\cdot A^n=D\cdot A^n+\frac{1}{n+1}\int_{L^{-1}} \Ric(\omega_\phi)\omega_\phi^{n}$$
Therefore $$\frac{c_0}{n+1}\int_{L^{-1}} \omega_\phi^{n+1}=L^n-((1+\tau_0)^n-1)L^{n-1}\cdot K_D$$
So we have 
\begin{lem}
\begin{equation}
\frac{c_0}{2}=-\sigma
\end{equation}
\end{lem}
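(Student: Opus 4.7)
The plan is to recognize that the lemma is essentially an identification of two expressions already computed in the section, so the proof amounts to carefully comparing them and verifying the constants line up. First I would observe that the displayed identity just above the lemma,
$$\frac{c_0}{n+1}\int_{L^{-1}} \omega_\phi^{n+1}=L^n-((1+\tau_0)^n-1)L^{n-1}\cdot K_D,$$
already supplies one side of what we want, provided we can rewrite the integral as a topological intersection number.

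Next I would note that since $\omega_\phi$ extends by zero across $D$ as a closed positive $(1,1)$-current in the class $A=(1+\tau_0)[D]-\ocal(1)$ (as established in the lemma preceding this one), we have
$$\int_{L^{-1}}\omega_\phi^{n+1}=\int_{\hat L}\omega_\phi^{n+1}=A^{n+1}.$$
Expanding $A^{n+1}$ using the three intersection numbers already computed in this section, namely $D^{n+1}=L^n$, $\ocal(1)^{n+1}=(-1)^n L^n$, and $D^a\cdot \ocal(1)^b=0$ for $0<a,b$ with $a+b=n+1$, yields
$$A^{n+1}=[(1+\tau_0)^{n+1}-1]L^n=a_0.$$

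Substituting this back into the Ricci identity gives $\frac{c_0}{n+1}\,a_0 = L^n-((1+\tau_0)^n-1)L^{n-1}\cdot K_D$. On the other hand, from the already-computed expression
$$b_1-a_1=\frac{n+1}{2}\bigl[((1+\tau_0)^n-1)L^{n-1}\cdot K_D-L^n\bigr],$$
we read off $-\sigma=\frac{a_1-b_1}{a_0}=\frac{n+1}{2a_0}\bigl[L^n-((1+\tau_0)^n-1)L^{n-1}\cdot K_D\bigr]$. Matching the two right-hand sides immediately yields $\frac{c_0}{2}=-\sigma$, which is the claim.

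There is no serious obstacle here: all the hard work—computing $a_0$, $a_1$, $b_1$ and deriving the Ricci/scalar curvature identity on $L^{-1}$—was already completed earlier in the section. The only thing to be slightly careful about is the passage from the integral over the open manifold $L^{-1}=\hat L\setminus D$ to the intersection number $A^{n+1}$ on $\hat L$, which is legitimate because $\omega_\phi$ has finite mass near $D$ and extends by El Mir's theorem as a closed current in the class $A$.
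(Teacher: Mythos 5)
Your proof is correct and follows essentially the same route as the paper: it combines the Ricci/scalar-curvature identity $\frac{c_0}{n+1}\int_{L^{-1}}\omega_\phi^{n+1}=L^n-((1+\tau_0)^n-1)L^{n-1}\cdot K_D$ with the already-computed quantities $a_0$, $b_1-a_1$, and the identification $\int_{L^{-1}}\omega_\phi^{n+1}=A^{n+1}=a_0$ (justified by the El Mir extension and the class computation for $\omega_\phi$). You have simply spelled out the intermediate step $A^{n+1}=[(1+\tau_0)^{n+1}-1]L^n=a_0$, which the paper leaves implicit; the substance is identical.
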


Consider the exact sequence 
\begin{eqnarray*}
0&\to & H^0(\hat{L},kA-(a+1)D)\to H^0(\hat{L},kA-aD)\to H^0(D,kA-aD)\\ &\to & H^1(\hat{L},kA-(a+1)D)\to\cdots
\end{eqnarray*}
Since $\hat{K}=\pi^*(K_D-L)+\ocal(-2)$, we have 
$$H^1(\hat{L},kA-(a+1)D)=H^1(\hat{L},\hat{K}+L_a)$$
where $L_a=\pi^*((k(1+\tau_0)-a)L-K_D)+\ocal(k\tau_0-a+1))$. And when $a<k\tau_0+1$, $L_a$ is ample. Therefore by Kodaira vanishing theorem,  $$H^1(\hat{L},kA-(a+1)D)=0$$
But when $a=k\tau_0+1$, $(kA-aD)|_{fibre}=\ocal(-1)$, so $H^0(\hat{L},kA-aD)=0$.
Therefore, we have complete decomposition
$$H^0(\hat{L},kA-D)=\bigoplus_{a=1}^{k\tau_0}H^0(D,kA-aD)$$
Actually, let $s_M\in H^0(\hat{L},\ocal(1))$ be a section whose zero set is $M$,
we can choose a natural injection $$j_a:H^0(D,kA-aD)\to H^0(\hat{L},kA-D)$$ 
with $j_a(s_a)=\pi^*(s_a)\otimes s_M^{k\tau_0-a}$
\

\section{Local Data}  

\subsection{Basic Settings}
We will denote by $\epsilon (k)$ a quantity depending on $k$ that is $O(k^{-m})$ as $k\rightarrow\infty$, for all $m\geq 0$. 
For simplicity, when we have equation of the form $a=(1+\epsilon(k))b$, we will simply write $a=b$.

We will also denote by $L^{-1}$ the open manifold $\hat{L}\backslash D$ and by $L$ the open manifold $\hat{L}\backslash M$. It is worth to mention that the gluing map is just $v\mapsto v^{-1}$.

Recall $h_M$ is the metric of $L$ on $M$.  
Let $e_L$ be a local frame for $L$ on $M$ with $|e_L|^2_{h_M}=e^{-\phi_M}$. 
Then $p^*e_L$ is a local frame of $\pi^*L$ on $\hat{L}$.

There is an unique, up to scalar multiplication, section $s_D\in H^0(\hat{L},[D])$ such that the zero set $Z_{s_D}=D$. On $L$, $s_D$ is a section of $\pi^*L$. In a neighborhood $U_p$ of  a point $p\in D$, using the frame $e_L$, $L$ has local coordinates $(w,z)$, where $w$ is local coordinates for $D$. We will always choose $U_p$ to be of the form $\pi^{-1}(V_p)$, where $V_p$ is a neighborhood of $p$ in $D$.
Then under the frame $p^*e_L$, 
$s_D$ is a function $\lambda z$. We choose and fix $s_D$ so that the $\lambda$ is $1$. Namely, locally $s_D(ze_L)=zp^*e_L$. 

$e_L\otimes s_D^{\tau_0}$ is now a local frame for $A$ on $L^{-1}$. And we set $$|p^*e_L\otimes s_D^{\tau_0}|_h=e^{-\phi_M-2f(t)}$$
 
In $U_p$, $p^*e_L^{1+\tau_0}$ is a local frame for $A$. One can easily check that we have to set in $U_p$, $$|p^*e_L^{1+\tau_0}|_h=e^{-\phi_M-2f(t)-\tau_0\log |z|^2}$$
in order to be compatible with the norm of $p^*e_L\otimes s_D^{\tau_0}$, since $s_D=zp^*e_L$.

Clearly, the metric $h$ defined above has curvature form $\omega_\phi$ on $L^{-1}$.For cohomological reason, we have that the curvature current of $h$ is just the extended $\omega_\phi$.

Now $$\omega_\phi^{n+1}=(n+1)(1+\tau)^np^*\omega_M^n\phi dt\wedge d^ct$$
so the volume form is $$\frac{\omega_\phi^{n+1}}{(n+1)!}=(1+\tau)^np^*\frac{\omega_M^n}{n!}\phi dt\wedge d^ct$$

So for $s\in \hcal_k$, in $U_p$, $s=q(w,z)p^*e_L^{k(1+\tau_0)}$. The $L^2$ integral is given by $$\int |q|^2e^{-kg}(1+\tau)^n\phi(\tau)|\frac{dz}{z}|^2\frac{\omega_M^n}{n!}$$
where $g=\phi_M+2f(t)+\tau_0\log |z|^2$.

We have $t=-\log |z|+\frac{\phi_M}{2}$. So 
$$g=(1+\tau_0)\phi_M+2(f(t)-\tau_0 t)$$
We can then estimate 
\begin{eqnarray*}
f(t)-\tau_0 t&=&\int_{\bar{\tau}}^\tau\frac{x-\tau_0}{\phi(x)}dx\\
&\sim &\log (\tau_0-\tau)
\end{eqnarray*}
so asymptocally $g\sim \log (\tau_0-\tau)$.

Now it is clear that the $L^2$ sections are exactly those vanishing along $D$, therefore
 $\hcal_k$ is the image of the embedding map 
$$\otimes_{s_D}:H^0(\hat{L},kA-D)\to H^0(\hat{L},kA)$$
By abuse of notation, we will still denote by $j_a$ the composition $\otimes_{s_D}\circ j_a$.

In order to produce an orthonormal basis for $\hcal_k$, we notice the circle symmetry of the metric and the volume form, which means the following 
\begin{lem} 
Under the embeddings $j_a$, the decomposition
$$\hcal_k=\bigoplus_{a=1}^{k\tau_0}H^0(D,kA-aD)$$
is orthogonal.
\end{lem}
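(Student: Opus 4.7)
\emph{Proof plan.} The plan is to exploit the circle symmetry inherent in the Calabi ansatz. The $S^1$-action on $\hat{L}$ rotating the fibres of $L$ (and fixing $D$ and $M$ pointwise) preserves both $\omega_\phi$ and the singular Hermitian metric $h$. Indeed $\omega_\phi = p^*\omega_M + 2i\p\bp f(t)$ involves only $p^*\omega_M$ and the momentum variable $t = -\log|z| + \phi_M/2$, both of which are $S^1$-invariant; similarly the local weight $g = (1+\tau_0)\phi_M + 2(f(t)-\tau_0 t)$ defining $h$ is $S^1$-invariant. Consequently, the volume form $\omega_\phi^{n+1}/(n+1)!$ and the $L^2$-pairing on $\hcal_k$ are both $S^1$-invariant, so the induced $S^1$-action on $\hcal_k$ is by unitary operators.

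The next step is to identify each summand $j_a(H^0(D,kA-aD))\otimes s_D$ as a single weight space for this unitary action, with weight depending strictly monotonically on $a$. Under the fibre rotation $z \mapsto e^{i\theta} z$ the pullback $\pi^* s_a$ is invariant (since $D$ is fixed pointwise); the defining section $s_D$, locally $z\cdot p^*e_L$ near $D$, transforms with weight $+1$; and the defining section $s_M$, locally the fibre coordinate $v = 1/z$ near $M$ under the identification $\ocal(1)=[M]$, transforms with weight $-1$. Because the embedded section in $\hcal_k$ contains the factor $s_M^{k\tau_0-a}$ together with a compensating power of $s_D$ required to land in $H^0(\hat{L},kA)$, its total $S^1$-weight is an affine function of $a$ with nonzero slope, hence strictly monotonic in $a \in \{1,\dots,k\tau_0\}$.

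Orthogonality is then immediate: for $\sigma_a,\sigma_b$ in the $a$-th and $b$-th summands with distinct $S^1$-weights $m_a \ne m_b$, the $S^1$-invariance of the pairing gives
$$\langle \sigma_a, \sigma_b \rangle = \langle e^{i\theta}\cdot\sigma_a, e^{i\theta}\cdot \sigma_b\rangle = e^{i(m_a-m_b)\theta}\langle \sigma_a,\sigma_b\rangle \quad\text{for all }\theta\in\R,$$
which forces $\langle\sigma_a,\sigma_b\rangle = 0$. The only mildly delicate point is determining the weights of $s_D$ and $s_M$; but the fact that they are distinct (and nonzero) is transparent, since $s_D$ and $s_M$ vanish to first order along the two disjoint $S^1$-fixed divisors $D$ and $M$, respectively, and these are the only two sources of non-invariance of the embedded sections.
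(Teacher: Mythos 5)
Your argument is correct, and it is essentially a careful elaboration of the paper's one-line justification (the paper proves nothing explicitly and simply invokes ``the circle symmetry of the metric and the volume form''). You make the same observation: the fibre-rotation $S^1$-action preserves $h$ and $\omega_\phi^{n+1}$, the summands $j_a(H^0(D,kA-aD))$ are distinct weight spaces (weight affine and strictly monotone in $a$, coming from the $s_M^{k\tau_0-a}$ factor, or equivalently from the local $z^a$ behaviour of $j_a(s_a)\otimes s_D$ near $D$), and an invariant Hermitian pairing of distinct weight spaces must vanish. The only cosmetic imprecision is the phrase ``a compensating power of $s_D$'' --- the $\otimes s_D$ twist is a fixed single power, independent of $a$; all the $a$-dependence of the weight sits in $s_M^{k\tau_0-a}$ --- but this does not affect the conclusion that the slope in $a$ is nonzero.
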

This lemma means that we do not need to use the \horman $L^2$ techniques to construct global sections using local sections.

For the space $H^0(D, kA-aD)=H^0(D, [k(1+\tau_0)-a]L)$, we have a natural Hermitian metric induced by $h_M$.
Given an unit section $s_a\in H^0(D, kA-aD)$, we can calculate the $L^2$ integral
$$\int_L |z|^{2a}|s_a|^2e^{-kg}(1+\tau)^n\phi(\tau)|\frac{dz}{z}|^2\frac{\omega_M^n}{n!} $$
where by abuse of notation, we use $s_a$ to denote its local representive under a local frame $e_L$.

We integrate along the fibres first by choosing a local frame for each point $p\in D$ such that $\phi_M(p)=0$. Then the integral is the same for every fibre:
\begin{equation}
I_a=\int_{\C} |z|^{2a-2}(1+\tau)^ne^{-kg}\phi(\tau)|dz|^2
\end{equation}
So the normalized $\bar{s_a}=\frac{j_a(s_a)}{\sqrt{I_a}}$ is of norm $1$ in $\hcal_k$.
\subsection{Estimations for $I_a$}
We assume that $a<<k$.

We have $t=-\log |z|$. So 
\begin{equation}
I_a=2\pi\int_{-\infty}^{\infty}e^{E_a(t)}dt
\end{equation}
where $$E_a(t)=-2at+n\log(1+\tau)-kg(\tau)+\log \phi(\tau)$$
We want to show that the integral is Laplace type, namely the Laplace method can be used to estimate it. So
noticing that $$\frac{dt}{d\tau}=\frac{1}{\phi(\tau)}$$
$$\frac{d}{dt}=\frac{d\tau}{dt}\frac{d}{d\tau}=\phi(\tau)\frac{d}{d\tau}$$
$$\frac{dg}{d\tau}=\frac{\tau-\tau_0}{\phi(\tau)}$$

we take the derivatives: 
$$\frac{dE_a}{dt}=-2a+\frac{n\phi(\tau)}{1+\tau}-2k(\tau-\tau_0)+\phi'(\tau)$$
$$\frac{d^2E_a}{dt^2}=\phi(\tau)(n\frac{(1+\tau)\phi'(\tau)-\phi(\tau)}{(1+\tau)^2}-2k+\phi'')$$

So it is clear that for $k$ big enough, we have $$\frac{d^2E_a}{dt^2}<0$$ for all $t$
namely $E_a(t)$ is a concave function. 

It is easy to see that for $\tau$ far from $\tau_0$, $\frac{dE_a}{dt}>0$, and for $\tau$ close enough to $\tau_0$, $\frac{dE_a}{dt}<0$. Therefore, $E_a$ has an unique critical point $t_a$. We also write $\tau_a=\tau(t_a)$.

Letting $\frac{dE_a}{dt}=0$, we can estimate $\tau_a$, seeing $$2k(\tau_0-\tau_a)=2a-n\frac{\phi(\tau_a)}{1+\tau_a}-\phi'(\tau_a)$$
As $\phi(\tau)\sim (\tau-\tau_0)^2$ and $\phi'(\tau)\sim \tau-\tau_0$, we see that $$\tau_0-\tau_a=\frac{a}{k}$$
is a good estimation, with error $O(\frac{\tau_0-\tau_a}{k})=O(\frac{a}{k^2})$. So we have $$t_a\sim \frac{k}{a}$$

Then we can estimate $\frac{d^2E_a}{dt^2}(t_a)$: 
$$E_a''(t_a)=-(1+O(\frac{1}{k}))\eta_2(\tau_0)2k(\tau_a-\tau_0)^2$$
where $\eta_2(\tau)=\frac{\phi(\tau)}{(\tau-\tau_0)^2}$.
So $$E_a''(t_a)=-(1+O(\frac{1}{k}))2\eta_2(\tau_0)\frac{a^2}{k}$$
and we know that $2\eta_2(\tau_0)>0$.

\begin{lem}\label{margin}
Let $f(x)$ be a concave function. Suppose $f'(x_0)<0$, then we have
$$\int_{x_0}^\infty e^{f(x)}dx\leq\frac{e^{f(x_0)}}{-f'(x_0)}$$
\end{lem}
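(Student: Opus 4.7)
The plan is to bound the integrand pointwise by the tangent line at $x_0$ and then compute the resulting elementary exponential integral.

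Since $f$ is concave on $[x_0,\infty)$, the graph of $f$ lies below its tangent line at $x_0$: for every $x\geq x_0$,
$$f(x)\leq f(x_0)+f'(x_0)(x-x_0).$$
First I would invoke this inequality, exponentiate it, and use monotonicity of the exponential to write
$$\int_{x_0}^\infty e^{f(x)}\,dx\leq e^{f(x_0)}\int_{x_0}^\infty e^{f'(x_0)(x-x_0)}\,dx.$$

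Next I would use the hypothesis $f'(x_0)<0$, which makes the right-hand integral an honest decaying exponential. A direct computation gives
$$\int_{x_0}^\infty e^{f'(x_0)(x-x_0)}\,dx=\left[\frac{e^{f'(x_0)(x-x_0)}}{f'(x_0)}\right]_{x_0}^\infty=\frac{1}{-f'(x_0)},$$
where the evaluation at $\infty$ vanishes precisely because $f'(x_0)<0$. Combining with the previous display yields the claimed bound
$$\int_{x_0}^\infty e^{f(x)}\,dx\leq \frac{e^{f(x_0)}}{-f'(x_0)}.$$

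There is essentially no obstacle here: the only subtle point worth flagging is that concavity is used on the entire half-line $[x_0,\infty)$, not merely at $x_0$, so that the tangent-line majorization is global; and one should mention in passing that concavity together with $f'(x_0)<0$ forces $f'(x)\leq f'(x_0)<0$ for all $x\geq x_0$, which in particular guarantees integrability of $e^{f(x)}$ on $[x_0,\infty)$, so that both sides of the inequality are well-defined.
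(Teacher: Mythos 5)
Your proof is correct. It takes a different route from the paper, though the two are closely related. You bound the integrand pointwise via the tangent-line majorization $f(x)\leq f(x_0)+f'(x_0)(x-x_0)$ (a global consequence of concavity) and then compute the resulting elementary exponential integral directly. The paper instead works at the level of the derivative: concavity gives $f'(x)\leq f'(x_0)$ for $x\geq x_0$, so
$$-f'(x_0)\int_{x_0}^\infty e^{f(x)}\,dx\leq \int_{x_0}^\infty e^{f(x)}\bigl(-f'(x)\bigr)\,dx=e^{f(x_0)},$$
where the last equality uses that $e^{f(x)}\to 0$ as $x\to\infty$ (forced by $f'\leq f'(x_0)<0$). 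These are two sides of the same coin: integrating the derivative inequality from $x_0$ to $x$ recovers exactly your tangent-line bound. The paper's version avoids computing any integral explicitly since $e^{f(x)}(-f'(x))$ has the obvious antiderivative $-e^{f(x)}$, whereas yours is perhaps the more standard Chernoff-style estimate and makes the role of the linear upper bound transparent; both are essentially one line.
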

\begin{proof}
Since $f$ is concave, $f'(x)\leq f'(x_0)$
$$-f'(x_0)\int_{x_0}^\infty e^{f(x)}dx\leq \int_{x_0}^\infty e^{f(x)}(-f'(x))dx=e^{f(x_0)}$$
\end{proof}
The idea of the Laplace method is that we can use $\frac{E_a''(t_a)}{2}(t-t_a)^2$ to estimate $E_a(t)-E_a(t_a)$. But before we can use it, we must find a radius such that this estimation is valid.

We have $E_a'(t)=-2k(\tau-\tau_a)(1+O(\frac{1}{k}))$. So going from $\tau_a$ to $\tau_a+\epsilon_\tau$, the difference of $E_a$ is 
\begin{eqnarray*}
\Delta E_a &=&\int_{t_a}^{t_a+\epsilon_t}E_a'(t)dt\\
&=&(1+O(\frac{1}{k}))\int_{\tau_a}^{\tau_a+\epsilon_\tau}\frac{-2k(\tau-\tau_a)}{\phi(\tau)}d\tau
\end{eqnarray*}
Using new variable $y=\tau-\tau_a$, 
\begin{eqnarray*}
\Delta E_a &=&(1+O(\frac{1}{k}))\int_0^{\epsilon_\tau}\frac{1}{\eta_2(y+\tau_a)}\frac{-2ky}{(y-(\tau_0-\tau_a))^2}dy\\
&=&(1+O(\frac{a}{k}))Ck[\frac{\tau_0-\tau_a}{\tau_0-\tau_a-y}+\log (\tau_0-\tau_a-y)]_0^{\epsilon_\tau}
\end{eqnarray*}
where $C<0$. Assuming $\nu=\frac{\epsilon_\tau}{\tau_0-\tau_a}$ being small, we get 
$$\Delta E_a=(1+O(\frac{a}{k}+\nu^3))\frac{Ck}{2}\nu^2$$

In the spirit of lemma \ref{margin}, we need $\Delta E_a\sim (\log k)^2$, then $\nu\sim \frac{\log k}{\sqrt{k}}$. So $\nu$ is indeed small. Then $\epsilon_\tau\sim \frac{a\log k}{k\sqrt{k}}$. Then we can estimate $\epsilon_t$:
\begin{eqnarray*}
\epsilon_t&=&\int_{\tau_a}^{\tau_a+\epsilon_\tau}\frac{1}{\phi(\tau)}d\tau\\
&\sim & \frac{\epsilon_\tau}{(\tau_0-\tau_a)^2}\sim \frac{\sqrt{k}\log k}{a}
\end{eqnarray*}
For the left side of $t_a$, we have basically the same argument.

\begin{rem}
 So lemma \ref{margin} tells us that the mass is concentrated in a $\epsilon_t$ neighborhoof of $t_a$ with an error of $\epsilon(k)$. We have $\epsilon_t<<t_a$, so like the punctured disk case, we also have mass concentration here, which is key to our following calculations. 
\end{rem}
We also need to estimate $E_a^{(3)}(t)$ and $E_a^{(4)}(t)$
\begin{eqnarray*}
E_a^{(3)}(t)&=&\frac{d\tau}{dt}\frac{d}{d\tau}E_a''(t)\\ 
&=&\phi(\tau)[\phi'(-2k-\frac{n}{(1+\tau)^2}+\frac{n\phi'(\tau)}{1+\tau}+\phi'')\\
&+&\phi(\frac{2n}{(1+\tau)^3}-\frac{n\phi'(\tau)}{(1+\tau)^2}+\frac{n\phi''(\tau)}{1+\tau}+\phi^{(3)})]
\end{eqnarray*}
\begin{eqnarray*}
E_a^{(4)}(t)&=&\frac{d\tau}{dt}\frac{d}{d\tau}E_a^{(3)}(t)\\
&=&\phi[\phi'^2[\frac{n\phi'(\tau)}{1+\tau}-2k-\frac{n}{(1+\tau)^2}+\phi'')]+\phi\phi'[\frac{6n}{(1+\tau)^3})+\frac{3n\phi''}{1+\tau}\\
&+&\frac{3n\phi'}{(1+\tau)^2}+3\phi^{(3)})]+\phi^2[-\frac{6n}{(1+\tau)^4}+\phi^{(4)}+\frac{2n\phi'}{(1+\tau)^3}-\frac{2n\phi''}{(1+\tau)^2}]\\
&+&\phi\phi''[-2k-\frac{n}{(1+\tau)^2}+\phi''+\frac{n\phi'}{1+\tau}]]\\
\end{eqnarray*}
So in the $\epsilon_t$ neighborhood of $t_a$, we have $$E_a^{(3)}(t)=(1+O(\frac{1}{k}))Ck\frac{a}{k}\frac{a^2}{k^2}=(1+O(\frac{1}{k}))C\frac{a^3}{k^2}$$
$$E_a^{(4)}(t)=(1+O(\frac{1}{k}))Ck\frac{a^2}{k^2}\frac{a^2}{k^2}=(1+O(\frac{1}{k}))C\frac{a^4}{k^3}$$

\begin{prop}
\begin{equation}\label{apr-Ia}
I_a=(1+O(\frac{1}{k})) 2\pi e^{E_a(t_a)}\sqrt{\frac{2\pi}{- E_a''(t_a) }}
\end{equation}
\end{prop}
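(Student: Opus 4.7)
The plan is to apply the Laplace method to the integral $I_a = 2\pi \int_{-\infty}^{\infty} e^{E_a(t)}\,dt$, using the ingredients already assembled in this section: strict concavity of $E_a$, the unique critical point $t_a$, the estimates $\tau_0-\tau_a \sim a/k$ and $E_a''(t_a) \sim -a^2/k$, and the bounds on $E_a^{(3)}$ and $E_a^{(4)}$ in an $\epsilon_t$-neighborhood of $t_a$, with $\epsilon_t \sim \sqrt{k}\log k / a$.

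First, I would split the integral into a main piece $J_{\mathrm{main}} = \int_{t_a-\epsilon_t}^{t_a+\epsilon_t} e^{E_a(t)}\,dt$ and two tail pieces. The earlier computation of $\Delta E_a$ at the endpoints gives $E_a(t_a \pm \epsilon_t) - E_a(t_a) \sim -\tfrac{1}{2}(\log k)^2$. Applying Lemma \ref{margin} to the concave function $E_a$ at $t_a+\epsilon_t$ (and its mirror at $t_a-\epsilon_t$), and using that $|E_a'(t_a\pm\epsilon_t)| \geq c\, a^2 \epsilon_t/k \sim a\log k/\sqrt{k}$, the tails are bounded by
\[
\frac{e^{E_a(t_a)} e^{-\tfrac12(\log k)^2}}{|E_a'(t_a\pm\epsilon_t)|} = \epsilon(k)\, e^{E_a(t_a)}\sqrt{\frac{2\pi}{-E_a''(t_a)}},
\]
which is absorbed into the claimed $(1+O(1/k))$ factor.

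Next, on the main interval I would Taylor expand with integral remainder:
\[
E_a(t) = E_a(t_a) + \tfrac{1}{2}E_a''(t_a)(t-t_a)^2 + \tfrac{1}{6}E_a^{(3)}(t_a)(t-t_a)^3 + R_4(t),
\]
where the linear term vanishes because $t_a$ is critical, and $|R_4(t)| \leq \tfrac{1}{24}\sup|E_a^{(4)}|(t-t_a)^4$. Setting $u = (t-t_a)\sqrt{-E_a''(t_a)}$, the main piece becomes
\[
J_{\mathrm{main}} = \frac{e^{E_a(t_a)}}{\sqrt{-E_a''(t_a)}} \int_{-U}^{U} e^{-u^2/2}\,e^{\,\mathrm{cub}(u)+\mathrm{rem}(u)}\,du,
\]
with $U \sim \log k$. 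The size estimates $|E_a^{(3)}|\sim a^3/k^2$ and $|E_a^{(4)}|\sim a^4/k^3$ together with $|E_a''|\sim a^2/k$ yield $|\mathrm{cub}(u)| \lesssim |u|^3/\sqrt{k}$ and $|\mathrm{rem}(u)| \lesssim u^4/k$ on the main interval, so $e^{\mathrm{cub}+\mathrm{rem}} = 1 + \mathrm{cub}(u) + \tfrac{1}{2}\mathrm{cub}(u)^2 + \mathrm{rem}(u) + O(|u|^6/k^{3/2})$ pointwise. When integrated against the Gaussian $e^{-u^2/2}$, the linear-in-cubic term is odd and vanishes by symmetry, while the even corrections $\tfrac{1}{2}\mathrm{cub}^2$ and $\mathrm{rem}$ each produce a contribution of size
\[
O\!\left(\frac{(E_a^{(3)})^2}{|E_a''|^3}\right) + O\!\left(\frac{E_a^{(4)}}{(E_a'')^2}\right) = O(1/k),
\]
and extending the $u$-integration from $[-U,U]$ to all of $\R$ incurs only a further $\epsilon(k)$ error. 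Combining, $J_{\mathrm{main}} = (1+O(1/k))\,e^{E_a(t_a)}\sqrt{2\pi/(-E_a''(t_a))}$, proving the proposition after multiplying by $2\pi$.

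The main obstacle is verifying the odd/even decomposition of the Taylor expansion genuinely absorbs the cubic term at the sharp $O(1/k)$ order: the cubic alone is only $O(1/\sqrt{k})$, and only after observing that its linear contribution is exactly odd does it vanish, leaving the genuine error coming from cubic-squared and quartic terms. The symmetric truncation at $t_a \pm \epsilon_t$ is essential for this cancellation, which is why the tail step must be performed first with a common endpoint distance.
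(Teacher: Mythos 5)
Your proof is correct and follows essentially the same Laplace-method approach as the paper: restrict to the $\epsilon_t$-neighborhood of $t_a$ using Lemma~\ref{margin}, Taylor-expand $E_a$ at the critical point, discard the cubic term by odd symmetry of the truncated Gaussian measure, and bound the quartic remainder to obtain the $O(1/k)$ relative error. You are in fact slightly more careful than the paper's own proof, which bounds only the quartic remainder $\delta(\bar t)$ and passes silently over the even contribution $\tfrac12 b^2\bar t^6$ arising from squaring the cubic term in the exponential; your explicit estimate $(E_a^{(3)})^2/|E_a''|^3 = O(1/k)$ closes that small gap.
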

\begin{proof}

We can write $I_a=2\pi e^{E_a(t_a)}\int_0^{\infty}e^{E_a''(t_a)(t-t_a)^2/2+b(t-t_a)^3+\delta(t-t_a)}dt$. 
As we have seen that we can change the integral region to $(t_a-\epsilon_t,t_a+\epsilon_t)$. Then we can let $\bar{t}=t-t_a$. 
Then $$\int_0^{\infty}e^{E_a''(t_a)(t-t_a)^2/2+b(t-t_a)^3+\delta(t-t_a)}dt=(1+\epsilon(k))\int_{-\epsilon_t}^{\epsilon_t}e^{\frac{E_a''(t_a)}{2}\bar{t}^2+b\bar{t}^3+\delta(\bar{t}})d\bar{t}$$
with $\delta(\bar{t})=\bar{t}^4\gamma(\bar{t})$. 
By the symmetry of the measure, we can ignore the degree $3$ term. 
By our previous estimation of $E_a^{(4)}(t)$, we get that 
$$\delta(\bar{t})=C\frac{a^4}{k^3}\bar{t}^4(1+O(\frac{1}{k}))$$
Therefore the error is about $$C\frac{a^4}{k^3}\int_{-\infty}^{\infty}x^4e^{\frac{E_a''(t_a)}{2}x^2}dx=C\frac{a^4}{k^3}\frac{3\sqrt{2\pi}}{(-E_a''(t_a))^{5/2}}$$
And the ratio to $(-E_a''(t_a))^{-1/2}$ is $O(\frac{1}{k})$
\end{proof}

We have seen in dimension one that we will need to look at the ratios $\frac{I_a}{I_{a+1}}$. The squre root part is easy, namely
$$\sqrt{\frac{E_{a+1}''(t_{a+1})}{E_{a}''(t_{a})}}\approx \frac{a+1}{a}$$
Now we deal with the main part. For simplicity, we will write $$E_{a,0}=E_a(t_a)$$ $$g_a=g(\tau_a)$$
Then 
\begin{eqnarray*}
E_{a,0}-E_{a+1,0}&=& 2[(a+1)\int_{\bar{\tau}}^{\tau_{a+1}}\frac{dx}{\phi(x)}-a\int_{\bar{\tau}}^{\tau_{a}}\frac{dx}{\phi(x)}]\\
&+& n\log\frac{1+\tau_a}{1+\tau_{a+1}}-k(g_a-g_{a+1})+\log \frac{\phi(\tau_{a})}{\phi(\tau_{a+1})}
\end{eqnarray*}	
For the $g_a$ terms, we have $g_a=2\int_{\bar{\tau}}^{\tau_{a}}\frac{(x-\tau_0)dx}{\phi(x)}$.
Therefore 
\begin{eqnarray*}
g_a-g_{a+1}&=&2\int_{\tau_{a+1}}^{\tau_a}\frac{(x-\tau_0)dx}{\phi(x)}\\
&=&2\int_{\tau_{a+1}}^{\tau_a}\frac{\eta(x)dx}{x-\tau_0}\\
&=&2(1+O(\frac{1}{k}))\eta(\tau_a)\log \frac{\tau_0-\tau_{a}}{\tau_0-\tau_{a+1}}+O(\frac{1}{k})
\end{eqnarray*}
where $\eta(x)=\frac{1}{\eta_2(x)}$. And the last equation is estimated using integration by parts.

To estimate $\int_{\bar{\tau}}^{\tau_{a}}\frac{dx}{\phi(x)}$, we use integration by parts twice to get:
\begin{eqnarray*}
\int_{\bar{\tau}}^{\tau_{a}}\frac{dx}{\phi(x)}&=&\frac{\eta(\tau_a)}{\tau_0-\tau_a}-\frac{\eta(\bar{\tau})}{\tau_0-\bar{\tau}}+\eta'(\tau_a)\log(\tau_0-\tau_a)\\
&-&\eta'(\bar{\tau})\log(\tau_0-\bar{\tau})-\int_{\bar{\tau}}^{\tau_{a}}\eta''(x)\log (\tau_0-x)dx
\end{eqnarray*}	
The last integral converges even if we replace $\tau_a$ by $\tau_0$. 
We denote $$\Delta_t=(a+1)\int_{\bar{\tau}}^{\tau_{a}}\frac{dx}{\phi(x)}-a\int_{\bar{\tau}}^{\tau_{a}}\frac{dx}{\phi(x)}$$.
Then
\begin{eqnarray*}
\Delta_t =\eta(\tau_{a+1})k-\eta(\tau_a)k +(a+1)\eta'(\tau_{a+1})\log\frac{a+1}{k}-a\eta'(\tau_{a})\log\frac{a}{k}+O(\frac{1}{k})
\end{eqnarray*}

Since \begin{eqnarray*}
\eta_(\tau_{a+1})&=&\eta_(\tau_{a})+\eta'(\tau_a)(\tau_{a+1}-\tau_a)+O((\tau_{a+1}-\tau_a)^2)\\ &=&\eta_(\tau_{a})+\eta'(\tau_a)\frac{1}{k}+O(\frac{1}{k^2})
\end{eqnarray*}
we get
\begin{eqnarray*}
\Delta_t=O(\log k)+O(1)=O(\log k)
\end{eqnarray*}
Lastly, $$\log \frac{\phi(\tau_{a})}{\phi(\tau_{a+1})}\approx 0$$
So we have proved the following
\begin{lem}\label{2 terms}
$E_{a,0}-E_{a+1,0}=2k\eta(\tau_a)\log \frac{a+1}{a}+O(\log k)$
\end{lem}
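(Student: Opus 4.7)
\medskip\noindent\textbf{Proof proposal.}
The plan is to expand $E_{a,0}-E_{a+1,0}$ into its four constituent pieces as already displayed in the excerpt, and then control each piece using the integration-by-parts formulas for $t_a$ and $g_a$ together with the basic a priori estimate $\tau_0-\tau_a = a/k + O(a/k^2)$. Two of the four pieces are harmless to start with: $n\log\tfrac{1+\tau_a}{1+\tau_{a+1}}$ is $O(1/k)$ because $\tau_a,\tau_{a+1}\in [\tau_0-O(k^{-c}),\tau_0]$ and they differ by $O(1/k)$; and $\log\tfrac{\phi(\tau_a)}{\phi(\tau_{a+1})}$ is $O(1)$ because $\phi(\tau)\sim \eta_2(\tau_0)(\tau_0-\tau)^2$ near $\tau_0$ forces this ratio to be $\approx (a/(a+1))^2$. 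So the only contributions I have to track carefully are $-k(g_a-g_{a+1})$ and $2[(a+1)t_{a+1}-at_a]$.

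Next I would handle the $g$-term. Writing $\tfrac{x-\tau_0}{\phi(x)}=\tfrac{\eta(x)}{x-\tau_0}$ and using the integration-by-parts expansion already recorded in the excerpt, I obtain
\[
g_a-g_{a+1} \;=\; 2\eta(\tau_a)\log\frac{\tau_0-\tau_a}{\tau_0-\tau_{a+1}} + O(1/k).
\]
The smoothness of $\eta$ and the estimate $\tau_0-\tau_a = a/k + O(a/k^2)$ then give $\log\tfrac{\tau_0-\tau_a}{\tau_0-\tau_{a+1}}=\log\tfrac{a}{a+1}+O(1/k)$, so that
\[
-k(g_a-g_{a+1}) \;=\; 2k\eta(\tau_a)\log\frac{a+1}{a} + O(1),
\]
which already produces the claimed leading term of the lemma.

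The main task is then to show that $2[(a+1)t_{a+1}-at_a]=O(\log k)$, i.e.\ that the $t$-contribution is absorbed into the error. Using the twice-integrated-by-parts expression for $t_a$ (displayed in the excerpt) and the pointwise estimate $a/[k(\tau_0-\tau_a)]=1+O(1/k)$, the singular term contributes $aC_1(\tau_a)=k\eta(\tau_a)+O(1)$, hence
\[
(a+1)C_1(\tau_{a+1}) - aC_1(\tau_a) \;=\; k\bigl(\eta(\tau_{a+1})-\eta(\tau_a)\bigr)+O(1) \;=\; O(1),
\]
by a further Taylor expansion of $\eta$ with step $\tau_{a+1}-\tau_a=O(1/k)$. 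The logarithmic term $\eta'(\tau_a)\log(\tau_0-\tau_a)$ contributes, after an elementary computation of $(a+1)\log((a+1)/k)-a\log(a/k)=(a+1)\log(a+1)-a\log a-\log k=O(\log k)$, an amount of size $O(\log k)$. The boundary terms at $\bar\tau$ are constants, so their weighted difference is $O(1)$, and the convergent integral $\int_{\bar\tau}^{\tau_a}\eta''(x)\log(\tau_0-x)\,dx$ is uniformly $O(1)$, so its weighted difference is also $O(1)$. Adding up gives $\Delta_t=O(\log k)$, as recorded in the excerpt.

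The main obstacle, in my view, is not any one of these estimates individually but rather the bookkeeping: the naive leading terms $aC_1(\tau_a)\sim k\eta(\tau_a)$ are of size $k$, yet the lemma asserts that their $(a,a+1)$-difference is only $O(\log k)$. The cancellation has to be extracted by writing everything as an expansion in $\tau_0-\tau_a$ and then matching the coefficients of $k$, of $\log k$, and of $O(1)$ between the $(a+1)$- and $a$-versions. Once this cancellation is cleanly visible, combining the $g$-contribution with the $t$-contribution yields exactly $E_{a,0}-E_{a+1,0}=2k\eta(\tau_a)\log\tfrac{a+1}{a}+O(\log k)$, as claimed.
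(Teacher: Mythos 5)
Your proposal follows essentially the same decomposition and estimation strategy as the paper's proof: the same split into four pieces, the same treatment of the $g$-contribution via $\tfrac{x-\tau_0}{\phi(x)}=\tfrac{\eta(x)}{x-\tau_0}$, and the same handling of $\Delta_t$ via the twice-integrated-by-parts expansion of $t_a$ with the leading $k\eta(\tau_a)$-cancellation and an $O(\log k)$ logarithmic remainder. One small point of imprecision: for the tail integrals $J_a=\int_{\bar\tau}^{\tau_a}\eta''(x)\log(\tau_0-x)\,dx$, the inference that ``each $J_a$ is uniformly $O(1)$, hence $(a+1)J_{a+1}-aJ_a$ is $O(1)$'' is not valid as stated; what is actually needed is $J_{a+1}-J_a=O(\log(k/a)/k)$ (the extra interval has length $O(1/k)$ and the integrand there is $O(\log(k/a))$), giving $(a+1)J_{a+1}-aJ_a=a(J_{a+1}-J_a)+J_{a+1}=O(a\log(k/a)/k)+O(1)=O(\log k)$, which is still absorbed into the claimed error term, so the conclusion of the lemma is unaffected.
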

As long as $a=O(\frac{k}{(\log k)^2})$, we always have $e^{E_{a,0}-E_{a+1,0}}>k^N$ asymptotically for all $N$.

We then also have the following
\begin{lem}\label{3 terms}
$e^{2E_{a+1,0}-E_{a,0}-E_{a+2,0}}=\epsilon(k)$ as long as $a=O(\frac{\sqrt{k}}{\log k})$
\end{lem}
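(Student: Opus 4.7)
The plan is to treat $E_{a,0}$ as a smooth function $F(a):=E_{a,0}$ of the real variable $a$ and to read off $2E_{a+1,0}-E_{a,0}-E_{a+2,0}$ as minus the discrete second difference $F(a+2)-2F(a+1)+F(a)$, thereby reducing the lemma to a pointwise estimate of $F''(a)$.

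Since $a$ enters $E_a(t)$ only through the linear term $-2at$, we have $\partial_a E_a(t)=-2t$, and the envelope theorem combined with the critical point equation $E_a'(t_a)=0$ yields $F'(a)=-2t_a$. Differentiating $E_a'(t_a)=0$ in $a$ and using $\partial_a E_a'(t)=-2$ gives
$$\frac{dt_a}{da}=\frac{2}{E_a''(t_a)},\qquad F''(a)=-2\frac{dt_a}{da}=-\frac{4}{E_a''(t_a)}.$$
Inserting the estimate $E_a''(t_a)=-(1+O(1/k))\,2\eta_2(\tau_0)\,a^2/k$ already derived above (and using $\eta(\tau_0)=1/\eta_2(\tau_0)>0$) produces
$$F''(a)=(1+O(1/k))\,\frac{2k\eta(\tau_0)}{a^2}>0,$$
so $F$ is convex in the range of $a$ under consideration.

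Writing the second difference as a double integral of $F''$,
$$E_{a+2,0}-2E_{a+1,0}+E_{a,0}=\int_0^1\!\!\int_0^1 F''(a+s+u)\,du\,ds=(1+o(1))\,\frac{2k\eta(\tau_0)}{a^2}.$$
Under the hypothesis $a=O(\sqrt{k}/\log k)$ we have $k/a^2\gtrsim (\log k)^2$, so the right-hand side is at least $c(\log k)^2$ for some $c>0$ and therefore $2E_{a+1,0}-E_{a,0}-E_{a+2,0}\le -c(\log k)^2$. Exponentiating yields a bound by $e^{-c(\log k)^2}$, which decays faster than any polynomial in $k$ and is thus $\epsilon(k)$.

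The main subtlety is keeping the $(1+O(1/k))$ factor in the estimate of $E_a''(t_a)$ uniform across the full range $a=O(\sqrt{k}/\log k)$, but this uniformity is already built into the Laplace-method analysis leading to \eqref{apr-Ia}. A direct alternative would be to subtract Lemma \ref{2 terms} applied to the pairs $(a,a+1)$ and $(a+1,a+2)$ and then Taylor-expand both $\eta(\tau_a)$ and the log-difference $\log((a+2)/(a+1))-\log((a+1)/a)\sim -1/(a+1)^2$ to recover the same leading term $-2k\eta(\tau_0)/a^2$; this works, but forces one to expand the $O(\log k)$ remainder in Lemma \ref{2 terms} to exhibit cancellation between the two applications, which the envelope-theorem route elegantly bypasses.
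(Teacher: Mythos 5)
Your proof is correct, and it takes a genuinely different route from the paper. The paper obtains the lemma by applying Lemma \ref{2 terms} to the two adjacent pairs $(a,a+1)$ and $(a+1,a+2)$ and subtracting; after the Taylor expansion $\eta(\tau_{a+1})=\eta(\tau_a)+O(1/k)$ the leading difference is $2k\eta(\tau_a)\log\frac{(a+1)^2}{a(a+2)}\approx 2k\eta/a^2$, and the unresolved $O(\log k)$ remainder from each application of Lemma \ref{2 terms} is simply carried along and absorbed, since it is dominated by the main term $\gtrsim(\log k)^2$ in the range $a=O(\sqrt{k}/\log k)$. Your argument instead treats $F(a)=E_{a,0}$ as a smooth function, invokes the envelope theorem to get $F'(a)=-2t_a$ and $F''(a)=-4/E_a''(t_a)$, and writes the second difference as $\int_0^1\int_0^1 F''(a+s+u)\,du\,ds$. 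This buys a cleaner closed-form leading constant $(1+O(1/k))\cdot 2k\eta(\tau_0)/a^2$ directly from the already-established Laplace estimate of $E_a''(t_a)$, with no bookkeeping of the discrete-difference remainders; it also makes the convexity of $a\mapsto E_{a,0}$ transparent, which is conceptually useful elsewhere. One small inaccuracy in your concluding aside: the ``direct alternative'' does not actually require expanding the $O(\log k)$ remainder in Lemma \ref{2 terms} to exhibit cancellation --- the paper does exactly this subtraction and gets away with leaving $O(\log k)$ as is, because the hypothesis $a=O(\sqrt{k}/\log k)$ makes the main term $2k\eta/a^2\gtrsim(\log k)^2$ dominate. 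So the two routes are closer in cost than your remark suggests, though the envelope-theorem version is still the more elegant of the two.
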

\begin{proof}
The main difference is $\Delta_{\eta}=2k\eta(\tau_a)\log \frac{a+1}{a}-2k\eta(\tau_{a+1})\log \frac{a+2}{a+1}$. So
\begin{eqnarray*}
\Delta_{\eta}&=&2k[\eta(\tau_a)\log \frac{a+1}{a}-(\eta(\tau_a)+O(\frac{1}{k})\log \frac{a+2}{a+1})]\\&=&2k\eta(\tau_a)\log \frac{(a+1)^2}{a(a+2)}
\end{eqnarray*}
Therefore $$e^{E_{a,0}+E_{a+2,0}-2E_{a+1,0}}\approx e^{\frac{2k}{a^2}\eta}*e^{O(\log k)}$$
Now as long as $a=O(\frac{\sqrt{k}}{\log k})$, we have $e^{E_{a,0}+E_{a+2,0}-2E_{a+1,0}}>k^N$ asymptotically for all $N$.
\end{proof}

\section{Calculations of $\mu_a$}

\begin{nota}
For simplicity we will fix the notation	$N=k(1+\tau_0)$ 
\end{nota}
The injection $j_a$ keeps orthogonality, namely if we have a pair of orthogonal sections $s_{a,i}\in H^0(D, kA-aD)$, $i=1,2$. Then we have 
$$(\bar{s}_{a,1},\bar{s}_{a,2})_h=0$$
Therefore, let $\{s_{a,i}\}, 1\leq i\leq m_i$ be an orthonormal basis of $H^0(D, kA-aD)$, then we have an orthonormal basis
$\{\overline{s_{a,i}}\}$ of $\hcal_k$.
We will use the notations $$\overline{\rho_a}=\sum_i |\bar{s}_{a,i}|_h^2$$ 
$$\rho_a=\sum_i |s_{a,i}|_{h_M}^2$$
Their relations is the following:
$$\overline{\rho_a}=\frac{\rho_a}{I_a}|z|^{2a}e^{-kg+(N-a)\phi_M}$$
where $N=k(1+\tau_0)$.

We have the Bergman kernel $$\bar{\rho}=\sum \overline{\rho_a}$$
and $$\omega_{FS}=i\p\bp \log\bar{\rho}+k\omega_\phi $$

Since  the fibres are totally geodesic, we can estimate the injective radius and see that in order for the injective radius to be $\geq \frac{\log k}{\sqrt{k}}$, we need $$\frac{1}{\log (1/x)}=O(\frac{\log k}{\sqrt{k}})$$ For simplicity, we just assume $\log \frac{1}{x}\leq \frac{\sqrt{k}}{\log k}$. That means outside the disc bundle $D_r$ of radius $r=e^{(-k^{1/2}\log k) /2}$, we have the asymptotic expansion of the Bergman kernel just the same as the compact case, namely
$$\bar{\rho}=k^{n+1	}(1+\frac{c_0}{2}\frac{1}{k}+O(\frac{1}{k^2}))$$
Our estimations for $I_a$ works for $a<<k$. We need to show that those $\overline{s_{a,i}}$'s with $a$ big, say $a\geq k^{3/4}$, has $\epsilon(k)$ contributions to $\bar{\rho}$ within $D_r$.
\begin{lem}
Let $\rho_{in}=\sum_{a<k^{3/4}} \overline{\rho_a}$.
For $a\geq k^{3/4}$, we have 
$$\overline{\rho_a}=\epsilon(k)\rho_{in}$$
within $D_r$
Therefore, $$\bar{\rho}=(1+\epsilon(k))\rho_{in}$$
within $D_r$
\end{lem}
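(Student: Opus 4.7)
The plan is to analyze the ratio $\overline{\rho_{a+1}}/\overline{\rho_a}$ pointwise within $D_r$ and show that this ratio is uniformly less than $1$, and in fact very small, once $a$ exceeds a threshold much smaller than $k^{3/4}$. From the formula $\overline{\rho_a}=\rho_a I_a^{-1}|z|^{2a}e^{-kg+(N-a)\phi_M}$ and the identity $|z|^2 e^{-\phi_M}=e^{-2t}$, we obtain
\begin{equation*}
\frac{\overline{\rho_{a+1}}}{\overline{\rho_a}}=\frac{\rho_{a+1}}{\rho_a}\cdot\frac{I_a}{I_{a+1}}\cdot e^{-2t}.
\end{equation*}
The factor $\rho_{a+1}/\rho_a$ is controlled by the Tian--Yau--Zelditch expansion on $D$: since $N-a\geq k$ throughout, $\rho_{a+1}/\rho_a=1+O(1/k)$. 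Combining Proposition \eqref{apr-Ia} with Lemma \ref{2 terms} and $\log\frac{a+1}{a}=\frac{1}{a}+O(a^{-2})$ yields $\log(I_a/I_{a+1})=\frac{2k\eta(\tau_a)}{a}+O(\log k)$, so that
\begin{equation*}
\log\frac{\overline{\rho_{a+1}}}{\overline{\rho_a}}=\frac{2k\eta(\tau_a)}{a}-2t+O(\log k).
\end{equation*}

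Within $D_r$ one has $t\geq\tfrac{\sqrt{k}\log k}{2}$, so the right-hand side is negative whenever $a$ exceeds the threshold $a_{\mathrm{th}}:=\frac{4\sqrt{k}\,\eta_{\max}}{\log k}$, where $\eta_{\max}:=\sup_{\tau\in[0,\tau_0]}\eta(\tau)$ is a finite positive constant. For large $k$ we have $a_{\mathrm{th}}\ll k^{3/4}$, so $\overline{\rho_a}$ is strictly decreasing in $a$ on $[a_{\mathrm{th}},k\tau_0]$ at every point of $D_r$. Choosing $a_0:=\lceil a_{\mathrm{th}}\rceil<k^{3/4}$ and telescoping up to $k^{3/4}$,
\begin{equation*}
\log\frac{\overline{\rho_{k^{3/4}}}}{\overline{\rho_{a_0}}}\leq -2t\,(k^{3/4}-a_0)+2k\eta_{\max}\log\tfrac{k^{3/4}}{a_0}+O(k^{3/4}\log k)\leq -\tfrac{1}{2}k^{5/4}\log k
\end{equation*}
for $k$ large, the leading $-2t(k^{3/4}-a_0)\leq -\sqrt{k}\log k\cdot(k^{3/4}-a_0)$ dominating the other two $O(k\log k)$ contributions. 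Therefore $\overline{\rho_{k^{3/4}}}\leq\epsilon(k)\,\overline{\rho_{a_0}}\leq\epsilon(k)\,\rho_{\mathrm{in}}$, the last inequality because $\overline{\rho_{a_0}}$ is itself one of the summands of $\rho_{\mathrm{in}}$.

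By monotonicity every $\overline{\rho_a}$ with $a\geq k^{3/4}$ is bounded by $\overline{\rho_{k^{3/4}}}$, and there are at most $k\tau_0$ such indices, so $\sum_{a\geq k^{3/4}}\overline{\rho_a}\leq k\tau_0\cdot\epsilon(k)\,\rho_{\mathrm{in}}=\epsilon(k)\,\rho_{\mathrm{in}}$, which proves both assertions of the lemma. The main obstacle is verifying that the $O(\log k)$ error in Lemma \ref{2 terms} remains uniformly valid over the full telescoping range $a\in[a_0,k\tau_0]$: in the regime where $\tau_a$ is bounded away from $\tau_0$ the Laplace-method asymptotics used to derive that lemma must be revisited with $\eta(\tau_a)$ replacing $\eta(\tau_0)$, and one must simultaneously confirm that $\rho_{a+1}/\rho_a$ stays polynomially bounded throughout. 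Both checks are routine extensions of the arguments already carried out in Section 4.
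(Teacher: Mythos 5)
Your overall strategy — compare $\overline{\rho_{a+1}}$ to $\overline{\rho_a}$ and show the ratio is less than one within $D_r$ once $a$ exceeds a modest threshold — is sound, and the initial reduction $\overline{\rho_{a+1}}/\overline{\rho_a}=\tfrac{\rho_{a+1}}{\rho_a}\cdot\tfrac{I_a}{I_{a+1}}\cdot e^{-2t}$ is correct. But two things break.

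First, the threshold $a_{\mathrm{th}}=\tfrac{4\sqrt{k}\,\eta_{\max}}{\log k}$ is not well defined as stated, because $\eta(\tau)=(\tau-\tau_0)^2/\phi(\tau)$ is \emph{unbounded} on $[0,\tau_0]$: the momentum profile satisfies $\phi(0)=0$ (simple zero at the zero section $M$), so $\eta(\tau)\to\infty$ as $\tau\to 0^+$. This is repairable — you only need a bound on $\eta(\tau_a)$ for $a\leq k^{3/4}$, where $\tau_a$ stays within $O(k^{-1/4})$ of $\tau_0$ — but as written $\eta_{\max}=+\infty$.

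Second, and this is the genuine gap, the step ``By monotonicity every $\overline{\rho_a}$ with $a\geq k^{3/4}$ is bounded by $\overline{\rho_{k^{3/4}}}$'' rests on the formula $\log(I_a/I_{a+1})=\tfrac{2k\eta(\tau_a)}{a}+O(\log k)$, which you derived from Proposition \eqref{apr-Ia} and Lemma \ref{2 terms}. Every one of those estimates assumes $a\ll k$: the Laplace method there is built around the fact that $\tau_a=\tau_0-a/k$ lies in the regime where $\phi$ has a \emph{double} zero, so that $\phi(\tau_a)\sim(\tau_a-\tau_0)^2$, $E_a''(t_a)\sim -a^2/k$, and so on. For $a$ comparable to $k\tau_0$, $\tau_a$ approaches $0$ where $\phi$ has only a \emph{simple} zero, the saddle-point relation $\tau_0-\tau_a=a/k$ stops making sense, and for $a>k\tau_0/2$ the fiberwise density $e^{F_a}$ has no interior critical point at all. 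Treating this as a ``routine extension'' underestimates the problem: the asymptotics genuinely change character, and you have not established any substitute bound on $I_a/I_{a+1}$ in the range $a\in[k^{3/4},k\tau_0]$.

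The paper's proof sidesteps this entirely by exploiting a purely algebraic fact: with $F_a(t)=-2at-kg$, one has $F_a'(t)-F_{a_0}'(t)=-2(a-a_0)$, a $t$-independent negative constant for any $a>a_0$, requiring no Laplace analysis and valid uniformly for all admissible $a$. Hence $\overline{\rho_a}/\overline{\rho_{a_0}}$ (whose $t$-dependence is $e^{F_a-F_{a_0}}$) is strictly decreasing in $t$, so it suffices to locate a single crossover point, which the paper does by comparing the masses of the two peaked densities near $t_{a_0}$. That concavity/monotonicity argument gives the pointwise comparison $\overline{\rho_a}<\overline{\rho_{a_0}}$ on all of $D_r$ simultaneously for every $a>a_0$, whereas your telescoping plus monotonicity-in-$a$ approach requires controlling the $I_a$-ratios over the entire range and therefore cannot avoid reproving the Laplace estimates where they currently do not apply. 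You would do better to replace your ``monotonicity in $a$'' step with the paper's observation about $F_a'$, after which your telescoping up to $a=k^{3/4}$ becomes unnecessary and a single comparison against a bridge index $a_0$ (the paper uses $k^{5/8}$) suffices.
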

\begin{proof}
This is a routine. We can use a middle term, for example $a_0=k^{5/8}$, as a bridge. It is clear that $\overline{\rho_{a_0}}=\epsilon(k)\rho_{in}$ within $D_r$. 

The norm along a fibre over $p$, with $\phi_M(p)=0$, is of the form $e^{F_a}$, where $F_a=-2at-kg$. One can again check that $F_a(t)$ is concave. And $F_a'<F_{a_0}'$ for $a>a_0$. Now within the annulus $t_{a_0}\leq t\leq t_{a_0}+\epsilon_{t,a}$, the integral of $\rho_a$ is smaller than $\rho_{a_0}$. So we must have $\rho_a<\rho_{a_0}$ at $t=t_{a_0}+\epsilon_{t,a}$. Therefore $\rho_a<\rho_{a_0}$ for all $t\geq t_{a_0}+\epsilon_{t,a}$. And the  lemma is proved.
\end{proof}
\begin{rem}
	This lemma allows us to calculate $\mu_{a,i}$ for $a$ small, using just our estimations for $I_b$ with $b<<k$ without worrying about the bigger $b$. So in the following, we will always assume $b<<k$. And in the summations, we often ignore the upper and lower indexes, keeping in mind that the numbers of terms that matter are $<<k$. 
\end{rem}

We now calculate 
\begin{eqnarray*}
\mu_{a,i}&=&\int \frac{|\bar{s}_{a,i}|^2_h}{\bar{\rho}}\frac{\omega_{FS}^{n+1}}{(n+1)!}\\
&=&\int \frac{|\bar{s}_{a,i}|^2_h}{\overline{\rho_a}}\frac{\overline{\rho_a}}{\bar{\rho}}\frac{\omega_{FS}^{n+1}}{(n+1)!}\\
&=&\int \frac{|s_{a,i}|^2_{h_M}}{\rho_a}\frac{\overline{\rho_a}}{\bar{\rho}}\frac{\omega_{FS}^{n+1}}{(n+1)!}
\end{eqnarray*}

As the Bergman kernel of $H^0(D, kA-aD)$, $\rho_a$ has asymptotic expansion
$$\rho_a=(N-a)^n[1+\frac{S_M}{2}\frac{1}{N-a}+O(\frac{1}{(N-a)^2})]$$
Recall that  $S_M$ is constant.

Given local frame $e_L$, we can write $$s_{a,i}=f_{a,i}e_L^{N-a}$$
We will denote the sum by
$$p_a=\sum_i|f_{a,i}|^2$$
So $\rho_a=p_ae^{-(N-a)\phi_M}$. And let $$\Psi_k=\sum x^{a-1}\frac{p_a}{I_a}$$
where $x=|z|^2$. Then $$\omega_{FS}=i\p\bp \log\Psi_k$$

Now in order to calculate $\mu_{a,i}$, we play the same trick by letting
$$r_a=\frac{I_a\Psi_k}{p_a}=\sum \frac{p_b}{p_a}\frac{I_a}{I_b}x^{b-1}$$

Then $$\omega_{FS}=i\p\bp \log r_a+i\p\bp \log p_a$$
We have $$i\p\bp \log p_a=\omega_{D,N-a}=(N-a)\omega_M+O(\frac{1}{(N-a)^2})$$
We use $y$ as local coordinates for $D$, then we have 
\begin{eqnarray*}
\omega_{FS}^{n+1}&=&i^{n+1}[\pz\bpz\log r_a+\py\bpy\log (r_ap_a)+(\pz\bpy+\py\bpz)\log r_a]^{n+1}\\
&=&(n+1)\pz\bpz\log r_a\wedge (\py\bpy\log (r_ap_a))^n\\
&+&(n+1)n(\pz\bpy\log r_a \wedge \py\bpz\log r_a)
\wedge  (\py\bpy\log (r_ap_a))^{n-1}
\end{eqnarray*}
At point $p\in D$, we can choose local frame to make $\phi_M(p)=0$ and $\p \phi_M(p)=0$, then 
$$\frac{p_{a+b}}{p_a}=\frac{\rho_{b+a}}{\rho_a}=(1-\frac{b}{N-a})^n(1+\frac{S_M}{2}\frac{b}{(N-a)^2})+O(\frac{1}{(N-a)^2})$$
and $$\py \frac{p_{a+b}}{p_a}=O(\frac{1}{(N-a)^2})$$
$$i\py\bpy \frac{p_{a+b}}{p_a}=(1-\frac{b}{N-a})^n(1+\frac{S_M}{2}\frac{b}{(N-a)^2}+O(\frac{1}{(N-a)^2}))(-b\omega_M)$$
By our estimations on the relations between the $I_a$'s (lemma \ref{2 terms} and lemma \ref{3 terms}), considered along the fibre over $p$, $r_a$ is very similar to the power series of $x$ we got in the one-dimensional model case. 
In particular, when $x$ is small enough, the value of $r_a$ is dominated by at most $3$ terms. 
We also have
$$\frac{\overline{\rho_a}}{\bar{\rho}}=\frac{x^{a-1}}{\sum \frac{I_a}{I_b}\frac{p_b}{p_a}x^{b-1}}=\frac{x^{a-1}}{r_a}$$
which, when $a$ is small, achieves its maximum at $x_a$ and then decay rapidly away from $x_a$.

Since $a<<k$, we have $O(\frac{1}{(N-a)^2})=O(\frac{1}{N^2})$.
Therefore,
$$\py r_a=\sum O(\frac{1}{N^2})\frac{I_ax^{b-1}}{I_b}$$
and 
$$i\py\bpy r_a=\sum -\frac{I_ac\omega_M}{I_{a+c}}[1+O(\frac{a}{N})]x^{a+c-1}$$ 
So 
\begin{eqnarray*}
	i\py\bpy \log r_a&=&\frac{1}{r_a}\sum -\frac{I_ac\omega_M}{I_{a+c}}[1+O(\frac{a}{N})]x^{a+c-1}\\
	&=&-\frac{\sum \frac{I_ac\omega_M}{I_{a+c}}[1+O(\frac{a}{N})]x^{c}}{\sum \frac{I_a}{I_{a+c}}[1+O(\frac{a}{N})]x^{c}}\\
	&=&o(N)\omega_M
\end{eqnarray*}
the reason for the last equality is that when $c>0$, $x^c$ is small, and when $c<0$, $c\geq -a$.

So $$i\py\bpy \log (r_ap_a)=(1+o(N))\omega_M$$
\begin{lem}
$(\pz\bpy\log r_a \wedge \py\bpz\log r_a)$ is $O(\frac{1}{k^2})$ compared to $\pz\bpz\log r_a\wedge \py\bpy\log (r_ap_a)$
\end{lem}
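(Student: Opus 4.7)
My plan is to exploit the weak base-dependence of $r_a$. Writing $r_a(z,y) = \sum_c g_c(y)\,\alpha_c\,|z|^{2(a+c-1)}$ with $g_c = p_{a+c}/p_a$ and $\alpha_c = I_a/I_{a+c}$, the expansion of $\rho_a$ applied at a local frame with $\phi_M(p)=\p\phi_M(p)=0$ gives $\py g_c = \bpy g_c = O(N^{-2})$ uniformly in $c$, so every $\py$ or $\bpy$ derivative brings in an $N^{-2}$ factor.

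Differentiating the series term by term, I would write
\[
\bpy r_a = \sum_c (\bpy g_c)\,\alpha_c\, x^{a+c-1},\qquad
\pz\bpy r_a = \sum_c (\bpy g_c)\,\alpha_c\,(a+c-1)\,x^{a+c-2}\bar z,
\]
and combine with the quotient identity $\pz\bpy\log r_a = \pz\bpy r_a / r_a - (\pz r_a)(\bpy r_a)/r_a^2$. Because only a bounded number of $c$-values contribute meaningfully to $r_a$ (by the mass concentration of Lemmas~\ref{2 terms}--\ref{3 terms}), dividing by $r_a$ preserves the $N^{-2}$ factor and yields
\[
\bpy\log r_a = O(N^{-2}),\qquad \pz\bpy\log r_a = O(N^{-2})\,\pz\log r_a.
\]

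For the wedge-product comparison, I would then estimate the four building blocks at the concentration radius $x\approx x_a$: from the dominant $b=a$ monomial of $r_a$ one gets $\pz\log r_a \sim (a-1)\,dz/z$; a Gaussian analysis of the $b = a\pm 1$ corrections (analogous to the computation of $I_a$ above) yields $\pz\bpz\log r_a \sim k\phi(\tau_a)/|z|^2 \sim a^2/(k|z|^2)$ as a coefficient of $i\,dz\wedge d\bar z$; and the preceding display provides $\py\bpy\log(r_a p_a) = (1+o(1))N\omega_M$. Putting these together, the ratio of the two $(2,2)$-forms is of order
\[
\frac{|\pz\bpy\log r_a|^2}{|\pz\bpz\log r_a|\cdot|\py\bpy\log(r_a p_a)|} \;\sim\; \frac{N^{-4}\,(a^2/|z|^2)}{(a^2/(k|z|^2))\cdot N} \;=\; \frac{k}{N^{5}} \;=\; O(k^{-4}),
\]
which is much stronger than the claimed $O(k^{-2})$. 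For $a$ outside the mass-concentration range, the whole contribution is already negligible by the lemma on $\overline{\rho_a}$ just above.

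The main technical obstacle is the size estimate for $\pz\bpz\log r_a$ at concentration: since the dominant $b=a$ term of $r_a$ is a pure monomial in $|z|^2$ and therefore contributes pluriharmonically, its $\pz\bpz\log$ vanishes identically, and the true magnitude is governed by the $b=a\pm1$ corrections. Extracting it requires a careful second-order Gaussian analysis along the fibre, paralleling the $I_a$ estimate of the previous section. Once that is in place, every other step is routine bookkeeping driven by the crucial $O(N^{-2})$ factor from $\bpy g_c$.
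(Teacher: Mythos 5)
Your approach and the paper's agree on the crucial input, namely that every $\py$ or $\bpy$ derivative of $p_{a+c}/p_a$ is $O(N^{-2})$, but they diverge afterwards. The paper never evaluates anything at the concentration radius: it writes both $\pz\bpz\log r_a$ and the mixed piece $\pz\bpy\log r_a$ as $\frac{1}{r_a^2}\sum_l(\cdot)x^{l-3}$ with the \emph{same} denominator $r_a^2$, expands the numerators in the double sum over $b,c$, and compares the resulting coefficients $c_l=\sum_{b,c}(c-1)(c-b)\,p_bp_cI_a^2/(p_a^2I_bI_c)$ against $c_l'=\sum_{b,c} b\,I_a^2/(I_bI_c)$ to read off that the mixed piece is an $O(b/N^2)=O(1/N)$ fraction of $\pz\bpz\log r_a$, and that $\beta_2$ is $O(1/N)$ against $\py\bpy\log(r_ap_a)\sim N$. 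This sidesteps exactly the step you flagged as the main obstacle: no absolute size estimate of $\pz\bpz\log r_a$ is ever needed, because the comparison is made at the level of the common rational expression before any pointwise evaluation.

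Your route, by contrast, requires a lower bound on $\pz\bpz\log r_a$ at $x\approx x_a$ (your claim that it is comparable to $k\phi(\tau_a)/|z|^2\sim a^2/(k|z|^2)$). This is plausible — it amounts to saying $\omega_{FS}\sim k\omega_\phi$ along fibres in the mass-concentration region — but it is not established anywhere in this section of the paper: the leading monomial of $r_a$ drops out of $\pz\bpz\log r_a$, and the residual must be extracted from the $b=a\pm1$ corrections via the second differences of $\log(p_b/I_b)$, i.e.\ a Gaussian analysis parallel to the $I_a$ estimate, exactly as you say. Until that step is supplied, your argument has a real hole; the coefficient-level trick in the paper is precisely what lets one get away without it. If you do fill the hole, your reward is a sharper bound $O(k^{-4})$ in place of the $O(k^{-2})$ the paper records (the paper itself remarks that a deeper argument would give a much smaller magnitude), so the two routes trade rigor-for-free against extra precision: the paper's coefficient comparison is self-contained but coarse, while yours is sharper but needs the concentration estimate of $\pz\bpz\log r_a$ carried out in detail.
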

\begin{proof}
We have $$\pz r_a=\sum \frac{p_{c}}{p_a}\frac{I_a(c-1)\bar{z}}{I_{c}}x^{c-2}$$
So $$\bpz\py \log r_a=\frac{z}{r_a^2}\sum bO(\frac{1}{N^2})\frac{I_a^2}{I_bI_c}x^{b+c-3}$$
So $$\pz\bpy\log r_a\wedge \bpz\py\log r_a=\beta_1\cdot \beta_2$$
where $$\beta_1=\frac{1}{r_a^2}\sum bO(\frac{1}{N^2})\frac{I_a^2}{I_bI_c}x^{b+c-3}$$
and $$\beta_2=\frac{1}{r_a^2}\sum bO(\frac{1}{N^2})\frac{I_a^2}{I_bI_c}x^{b+c-2}$$
and we also ignored the differential forms.
Now we compare $\beta_1$ and $\pz\bpz \log r_a$.
We have
\begin{eqnarray*}
\pz\bpz \log r_a&=&\frac{1}{r_a^2}(r_a\Delta_z r_a-(\pz r_a)^2)\\
&=&\frac{1}{r_a^2}\sum [(c-1)^2-(b-1)(c-1)]\frac{p_bp_cI_a^2}{p_a^2I_bI_c}x^{b+c-3}	
\end{eqnarray*}

Since all summations are over $b$ and $c$, we can collect the coefficients.
\begin{eqnarray*}
\pz\bpz \log r_a&=&\frac{1}{r_a^2}\sum_l c_lx^{l-3}\\
\beta_1&=&\frac{1}{r_a^2}\sum O(\frac{1}{N^2})c_l'x^{l-3}
\end{eqnarray*}
where $c_l=\sum_{b,c}[(c-1)^2-(b-1)(c-1)]\frac{p_bp_cI_a^2}{p_a^2I_bI_c}$ and $c_l'=\sum_{b,c}b\frac{I_a^2}{I_bI_c}$

So $\beta_1$ is $O(\frac{b}{N^2})=O(\frac{1}{N})$ compared to $\pz\bpz \log r_a$.

and $|\beta_2|$ is $O(\frac{1}{N})$. And the lemma is proved
\end{proof}
\begin{rem}
A deeper argument should be able to show that the mixed term is of a much smaller magnitude, but for our purpose, $O(\frac{1}{k^2})$ is enough.
\end{rem}
Now we have 
$$\omega_{FS}^{n+1}=(1+O(\frac{1}{k^2}))(n+1)\pz\bpz\log r_a\wedge (\py\bpy\log (r_ap_a))^n$$
\subsection{Inside}
According to lemma \ref{3 terms}, by "inside", we mean when $a=O(\frac{\sqrt{k}}{\log k})$. So there is a $\lambda>0$ such that 
$$\frac{k}{a^2}>\lambda (\log k)^2$$

Let $F_a(x)=\frac{r_a}{x^{a-1}}$, and $q_c=\frac{I_a}{I_{a+c}}\frac{p_{a+c}}{p_a}$
\begin{lem}[Three terms Lemma]
We only need the three terms $q_{-1}x^{-1}+1+q_1x$ to compute $\mu_{a,i}$.
\end{lem}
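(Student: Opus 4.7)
The plan is to show that replacing $F_a(x) = \sum_c q_c x^c$ by its three central Laurent terms introduces only an $\epsilon(k)$ error in the integral defining $\mu_{a,i}$, exploiting super-polynomial log-concavity of $\{q_c\}$ inherited from Lemma \ref{3 terms} together with the mass-concentration of $1/F_a(x) = x^{a-1}/r_a$ around its peak already obtained from the Laplace analysis of $I_a$.

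First I would apply Lemma \ref{3 terms} at each shifted index $a+c$ (all of which remain in the ``inside'' range for the relevant $c$) to obtain
\begin{equation*}
\frac{q_{c+1}q_{c-1}}{q_c^2} = \frac{I_{a+c}^2}{I_{a+c-1}I_{a+c+1}}\cdot\frac{p_{a+c-1}p_{a+c+1}}{p_{a+c}^2} = \epsilon(k),
\end{equation*}
uniformly in $c$, where the $p$-factor is $1+O(1/N)$. Viewing $\log q_c$ as a discrete concave function with second differences $\leq -\delta$, where $\delta \sim 2k\eta(\tau_a)/a^2$ is the gap provided by the proof of Lemma \ref{3 terms}, telescoping from $q_0 = 1$ yields
\begin{equation*}
\log(q_c x^c) \leq c\log\!\bigl(q_{\operatorname{sgn}(c)}x^{\operatorname{sgn}(c)}\bigr) - \tfrac12 \delta\, |c|(|c|-1), \qquad |c| \geq 1.
\end{equation*}

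In parallel, the Laplace analysis shows $1/F_a$ is super-polynomially concentrated in a window $W_a$ around $t_a$ of width $\epsilon_t \sim \sqrt k\log k / a$. At the peak, the balance $q_1 x_a = q_{-1}/x_a = \sqrt{q_1 q_{-1}}$ together with $q_1 q_{-1} = \epsilon(k)$ of size $e^{-\delta}$ gives $\log(q_{\pm 1} x_a^{\pm 1}) = -\delta/2$, and varying $t$ across $W_a$ shifts this by at most $O(\epsilon_t) = O(\sqrt k \log k / a)$, which is $o(\delta)$ exactly under the hypothesis $a = O(\sqrt k /\log k)$. Combining, on $W_a$ each higher-order term satisfies $|q_c x^c| \leq e^{-\delta c^2/2\,(1+o(1))}$ for $|c|\geq 2$; the geometric tail over $|c|\geq 2$ still sums to $\epsilon(k)$. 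Hence $F_a(x) = (q_{-1}x^{-1} + 1 + q_1 x)(1+\epsilon(k))$ on $W_a$, while outside $W_a$ the weight $1/F_a$ contributes $\epsilon(k)$ to $\mu_{a,i}$ regardless of which version of $F_a$ we use, so the three-term truncation yields $\mu_{a,i}$ up to an error absorbed into $\epsilon(k)$.

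The main technical point I expect to be delicate is the precise comparison $\delta/2 - \epsilon_t \gg 0$ in the borderline regime $a \sim \sqrt k / \log k$; it is precisely this balance that dictates the ``inside'' hypothesis. A secondary issue is uniformity in $c$ of the constants coming from the proof of Lemma \ref{3 terms}, but this is routine because the underlying quantities $\eta(\tau_{a+c})$, $\phi$, and their derivatives depend smoothly on the shift, so shifting $a$ by a polynomial-in-$k$ amount of $c$ does not degrade the estimates.
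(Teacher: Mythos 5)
Your argument identifies the relevant integration window with the Laplace concentration window $W_a$ of width $\epsilon_t \sim \sqrt{k}\log k/a$ around $t_a$, but this is not where the integral defining $\mu_{a,i}$ lives. The Laplace window is where the numerator $\overline{\rho_a}$ (i.e.\ $e^{E_a(t)}$) concentrates along the fibre, whereas the integrand of $\mu_{a,i}$ involves the \emph{ratio} $1/F_a=\overline{\rho_a}/\bar\rho$, which equals $1+o(1)$ throughout a much wider flat plateau of width $\sim\delta\sim k/a^2$ in $u=\log x$: indeed your own estimate $\log(q_{\pm1}x^{\pm1})=-\delta/2+O(\epsilon_t)<0$ says that on $W_a$ every non-constant term is small, so $F_a\approx 1$ there and $1/F_a$ is not concentrated on $W_a$ at all. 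The measure factor $\pz\bpz\log F_a$ is likewise negligible on the plateau (where $\log F_a$ is essentially constant); the mass of the fibre integral sits at the shoulders $q_{\pm 1}x^{\pm1}\approx 1$, at distance $\sim\delta/2$ from $u_a$. Since $(\delta/2)/\epsilon_t\sim\sqrt{k}/(a\log k)\to\infty$ for $a=o(\sqrt{k}/\log k)$, the shoulders lie far outside $W_a$, so the assertion that the contribution outside $W_a$ is $\epsilon(k)$ is false: that is where essentially all of $\mu_{a,i}$ comes from. (At the borderline $a\sim\sqrt{k}/\log k$ the two scales coincide, $\epsilon_t\sim\delta\sim(\log k)^2$, so $\epsilon_t$ is not $o(\delta)$ either, contrary to your parenthetical.)

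The correct argument, and what the paper does, works on the full region $\{1/F_a>\epsilon(k)\}$, which extends to $q_1x\sim e^{(\lambda/2)(\log k)^2}$ (the total $\omega_{FS}$-mass along a fibre being only polynomial in $k$, everything beyond that contributes $\epsilon(k)$). At the right edge of this region one bounds the \emph{ratio} $q_2x^2/(q_1x)=(q_2/q_1^2)(q_1x)$: Lemma \ref{3 terms} gives $q_2/q_1^2\approx e^{-\delta}$ with $\delta\gtrsim\lambda(\log k)^2$, which beats the factor $e^{(\lambda/2)(\log k)^2}$, and monotonicity in $x$ carries the bound over the whole region; symmetrically for $c\leq -2$. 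Your telescoped log-concavity of the $q_c$'s is fine and could feed this corrected argument, but the absolute bound $|q_cx^c|\leq e^{-\delta c^2/2}$ only holds near the plateau's centre, whereas the lemma must be verified at its edge.
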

\begin{proof}
Since the total measure of $\omega_\phi^{n+1}$ is $O(k^{n+1})$, we can ignore the part when $1/F_a(x)=\epsilon(k)$.

But when $$q_1x=e^{\lambda/2 (\log k)^2}$$ 
we have $$\frac{q_2}{q_1}x\approx (\frac{a+2}{a+1})^k(\frac{a}{a+1})^ke^{\lambda/2 (\log k)^2}<e^{-\lambda/2 (\log k)^2}$$ 
namely, as $x$ gets bigger away from $x_a$, before $q_2x^2$ can play any role, $1/F_a(x)$ is already $\epsilon(k)$. The argument is the same for $q_c$ with $c\leq -2$.
\end{proof}

Following the same line of arguments, we see that $$i\py\bpy\log r_a=O(1)\omega_M$$
Therefore we have 
\begin{equation}\label{fubini-study1}
\omega_{FS}^{n+1}=(1+O(\frac{1}{k}))(n+1)\pz\bpz\log r_a\wedge ((N-a)\omega_M)^n
\end{equation}
So in the integral for $\mu_{a,i}$, we can integrate along the fibres and then over $D$ as before.
\begin{prop}
$$\frac{1}{2\pi}\int \frac{1}{F_a(x)} i\pz\bpz \log F_a(x)=1+\epsilon(k)$$
for $a>1$.
\end{prop}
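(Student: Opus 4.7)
The plan is to compute the integral explicitly by exploiting the Three Terms Lemma and the $S^1$-symmetry of the fibre. By the Three Terms Lemma, for the purpose of this integral we may take $F_a(x)=q_{-1}x^{-1}+1+q_1x$ at the cost of an error $\epsilon(k)$: the terms $q_cx^c$ with $|c|\geq 2$ are negligible throughout the effective support of $F_a^{-1}$. Because the integrand is radial in $z$, setting $x=|z|^2$ and using the identity $i\pz\bpz u(|z|^2)=(xu'(x))'\,dx\wedge d\theta$ with $u=\log F_a$, the fibre integral reduces to
\[
\int_{\C}\frac{1}{F_a}\,i\pz\bpz\log F_a \;=\; 2\pi \int_0^\infty \frac{(xu'(x))'}{F_a(x)}\,dx.
\]

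Next I would integrate by parts. Since $xu'=xF_a'/F_a$ tends to $-1$ as $x\to 0^+$ and to $+1$ as $x\to\infty$, while $F_a\to\infty$ at both endpoints, the boundary terms vanish and
\[
\int_0^\infty \frac{(xu')'}{F_a}\,dx \;=\; \int_0^\infty \frac{x(F_a')^2}{F_a^3}\,dx.
\]
Setting $\alpha=2\sqrt{q_{-1}q_1}$, the substitution $x=\sqrt{q_{-1}/q_1}\,e^s$ symmetrizes $F_a$ to $1+\alpha\cosh s$ and transforms $xF_a'$ into $\alpha\sinh s$, so the integral equals $\alpha^2\int_{-\infty}^\infty \sinh^2 s/(1+\alpha\cosh s)^3\,ds$. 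A further substitution $\tau=\alpha\cosh s$ reduces this to
\[
I(\alpha)\;=\;2\int_{\alpha}^{\infty}\frac{\sqrt{\tau^2-\alpha^2}}{(1+\tau)^3}\,d\tau.
\]

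Finally, Lemma \ref{3 terms} forces $q_{-1}q_1=\epsilon(k)$, hence $\alpha=\epsilon(k)$, and the claim reduces to $I(\alpha)=1+\epsilon(k)$ as $\alpha\to 0$. Since $I(0)=2\int_0^\infty \tau/(1+\tau)^3\,d\tau=1$ already gives the correct leading order, the discrepancy is controlled by splitting the range $[\alpha,\infty)$ into $[\alpha,2\alpha]$ (contribution $O(\alpha)$ by a crude estimate handling the integrable singularity of $\sqrt{\tau^2-\alpha^2}$ at $\tau=\alpha$) and $[2\alpha,\infty)$ (contribution $O(\alpha^2\log(1/\alpha))$ via the expansion $\sqrt{\tau^2-\alpha^2}=\tau-\alpha^2/(2\tau)+O(\alpha^4/\tau^3)$). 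The most delicate step I foresee is the rigorous justification that the three-term truncation is admissible in both $F_a^{-1}$ and in the logarithmic derivative $i\pz\bpz\log F_a$ simultaneously; one must verify that the contribution of $q_cx^c$ with $|c|\geq 2$ stays $\epsilon(k)$ after differentiation, which follows by iterating Lemma \ref{3 terms} to obtain superexponential decay of the coefficients $q_c$ in $|c|$ on the effective support of $F_a^{-1}$.
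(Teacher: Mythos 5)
Your proof is correct, and it agrees with the paper on the two essential ingredients (the Three Terms Lemma, and the smallness of $q_{-1}q_1$ — the paper's $b/a^2=\epsilon(k)$ after normalizing to $Q(x)=1+ax+bx^2$), but you evaluate the resulting integral by a genuinely different route. The paper computes $\Delta_z\log Q=\frac{a+4bx+abx^2}{Q^2}$ and then evaluates $\int_0^\infty\frac{ax(a+4bx+abx^2)}{Q^3}\,dx$ in closed form (a rational-function antiderivative involving $\log\frac{1+a/d}{1-a/d}$ with $d=\sqrt{a^2-4b}$), reading off the limit $1+\epsilon(k)$ as $b/a^2\to 0$. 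You instead integrate by parts to $\int_0^\infty x(F_a')^2/F_a^3\,dx$, symmetrize with $x=\sqrt{q_{-1}/q_1}\,e^s$ to reach $\alpha^2\int_\R\sinh^2 s/(1+\alpha\cosh s)^3\,ds$ with $\alpha=2\sqrt{q_{-1}q_1}$, and a further substitution $\tau=\alpha\cosh s$ yields $I(\alpha)=2\int_\alpha^\infty\sqrt{\tau^2-\alpha^2}/(1+\tau)^3\,d\tau$, which exhibits the value $1=I(0)$ transparently and controls $I(\alpha)-I(0)$ by a soft perturbative estimate $O(\alpha^2\log(1/\alpha))$. Your route is more structural and avoids the paper's messy closed form, at the cost of two changes of variables; both land at the same $\epsilon(k)$ bound since $\alpha=\epsilon(k)$ by Lemma \ref{3 terms}. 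One small imprecision: you call $\sqrt{\tau^2-\alpha^2}$ an ``integrable singularity'' at $\tau=\alpha$, but this factor \emph{vanishes} there rather than blowing up, so the near-endpoint contribution is in fact $O(\alpha^2)$, better than your stated $O(\alpha)$; the conclusion is unchanged.
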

\begin{proof}
We just need to use the three terms, which can be standardized to $Q(x)=1+ax+bx^2$, with $\frac{b}{a^2}=\epsilon(k)$. Then 
$$i\pz\bpz \log Q(x)=\frac{a + 4 bx + abx^2}{(1+ax+bx^2)^2}$$
Therefore 
\begin{eqnarray*}
\int_0^{\infty}\frac{ax(a + 4 bx + abx^2)}{(1+ax+bx^2)^3}dx&=&\frac{a(a \sqrt{d}-2b(\log (1+a/d)-\log (1-a/d))) }{d^3}\\
&=&1+\epsilon(k)
\end{eqnarray*}
where $d=\sqrt{a^2-4b}$. 
\end{proof}
\begin{prop}
$$\frac{1}{2\pi}\int \frac{1}{F_1(x)} i\pz\bpz \log F_1(x)=\frac{1}{2}+\epsilon(k)$$
for $a>1$.
\end{prop}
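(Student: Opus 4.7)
\medskip

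The plan is to imitate the proof of the previous proposition, but accounting for the fact that $a=1$ is a boundary case of the decomposition $H^0(\hat L, kA-D) = \bigoplus_{a=1}^{k\tau_0} H^0(D, kA-aD)$: there is no index $a-1 = 0$, so the Laurent series $F_1(x) = \sum_{c \geq 0} q_c x^c$ has no $x^{-1}$ term. This asymmetry is what produces $\tfrac12$ in place of $1$.

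First, I would establish the analogue of the Three-Terms Lemma at $a=1$: on the range where $1/F_1(x)$ is not already $\epsilon(k)$, only the terms $1$ and $q_1 x$ matter. The argument is verbatim the one given for $a>1$: by lemma \ref{3 terms}, once $q_1 x \sim e^{\lambda(\log k)^2/2}$ we have $(q_2/q_1) x \leq e^{-\lambda(\log k)^2/2}$, so $q_c x^c$ for $c \geq 2$ is negligible wherever it is not annihilated by $1/F_1$. Since the $q_{-1} x^{-1}$ term is simply absent, the effective model is the two-term polynomial $Q(x) = 1 + q_1 x$.

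Second, I would apply the pointwise formula derived in the previous proof. With $A=1$, $B=q_1$, $C=0$ one obtains
\begin{equation*}
i\partial\bar\partial \log Q(x) = \frac{q_1}{(1+q_1 x)^2}\, i\, dz \wedge d\bar z,
\end{equation*}
so that, converting to polar coordinates as in the derivation preceding the $a>1$ case,
\begin{equation*}
\frac{1}{2\pi}\int \frac{1}{F_1(x)}\, i\partial_z\bar\partial_z \log F_1(x) = \int_0^\infty \frac{q_1}{(1 + q_1 x)^3}\, dx = \frac{1}{2}.
\end{equation*}
The elementary substitution $u = 1 + q_1 x$ makes the integral evaluate exactly to $\tfrac12$, matching the claimed leading term. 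Note the crucial absence of the term corresponding to $q_{-1}x^{-1}$: in the $a>1$ case the three-term normalization yielded a numerator of shape $a + 4bx + abx^2$ with a large "$a$" contribution that doubled the integral to $1$; here the $B=q_1$ coefficient multiplies only one term of the numerator.

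The main (and only non-trivial) step is controlling the truncation error to ensure it is $\epsilon(k)$. I would split the region of integration along the fibre into (i) the neighborhood where the two-term model dominates, handled by the exact computation above plus smooth perturbation estimates exactly as in the $a>1$ case, and (ii) the tail where $q_c x^c$ for $c \geq 2$ is not negligible. On (ii), $1/F_1(x)$ is already of size $\epsilon(k)$ by the three-terms argument, and the total mass of $\omega_{FS}^{n+1}$ being $O(k^{n+1})$ absorbs the remaining factors. Combined with equation \eqref{fubini-study1}, which reduces $\omega_{FS}^{n+1}$ to a single wedge of fibre and base forms up to $O(1/k)$, this yields the desired $\tfrac12 + \epsilon(k)$.
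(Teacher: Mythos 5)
Your proof is correct and takes essentially the same route as the paper: drop to the two-term model $Q(x)=1+q_1x$ (there is no $q_{-1}x^{-1}$ term when $a=1$), compute $i\pz\bpz\log Q$ pointwise, and evaluate the resulting rational integral. In fact your integrand $\dfrac{q_1}{(1+q_1x)^3}$ is the correct one; the paper's displayed $\displaystyle\int_0^\infty\frac{ax}{(1+ax)^3}\,dx$ has a stray factor of $x$ (that factor arises in the $a>1$ case only because one first pulls $q_{-1}x^{-1}$ out of $F_a$, which is absent here), and as written it equals $\frac{1}{2a}$ rather than $\frac12$.
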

\begin{proof}
We just need to use the two terms, which can be standardized to $Q(x)=1+ax$, with $\frac{1}{a}=\epsilon(k)$. Then 
$$i\pz\bpz \log Q(x)=\frac{a}{(1+ax)^2}$$
Therefore 
\begin{eqnarray*}
\int_0^{\infty}\frac{ax}{(1+ax)^3}dx&=&\frac{1}{2}
\end{eqnarray*}
\end{proof}
This means that the integrals along the fibres result in same constant with small errors. Then we can integrate over $D$.
Then by our choice of $s_{a,i}$, and the fact that the Bergman kernel $\rho_a$ is almost constant, we have the following theorem.
\begin{theorem}
$$\mu_{1,i}=\frac{1}{2}+O(\frac{1}{k})$$
and 
$$\mu_{a,i}=1+O(\frac{1}{k})$$ for $a>1$.
\end{theorem}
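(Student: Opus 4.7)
The plan is to assemble, rather than recompute, the ingredients already established in the section. Start from the basic expression
\[
\mu_{a,i} \;=\; \int \frac{|s_{a,i}|^2_{h_M}}{\rho_a}\,\frac{\overline{\rho_a}}{\bar\rho}\,\frac{\omega_{FS}^{n+1}}{(n+1)!},
\]
and first truncate it to the disc bundle $D_r$ of radius $r = e^{-(k^{1/2}\log k)/2}$. Outside $D_r$, the Bergman kernel $\bar\rho$ has the standard compact asymptotic, so the ratio $\overline{\rho_a}/\bar\rho$ is exponentially small there by the same Laplace-type estimates that gave mass concentration for $I_a$; this contributes $\epsilon(k)$. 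Inside $D_r$, use $\bar\rho = (1+\epsilon(k))\rho_{in}$ from the lemma on $\rho_{in}$ to reduce to summing over $a = O(\sqrt{k}/\log k)$, which is the regime covered by the three-terms lemma.

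Next, on each fibre, fixing a local frame with $\phi_M(p)=0$ at the base point $p$, one checks the clean identity $\overline{\rho_a}/\bar\rho = 1/F_a(x)$. Combining this with the factorization
\[
\frac{\omega_{FS}^{n+1}}{(n+1)!} \;=\; \bigl(1 + O(k^{-2})\bigr)\, i\partial_z\bar\partial_z \log r_a \wedge \frac{(i\partial_y\bar\partial_y \log(r_a p_a))^n}{n!}
\]
(already derived from the cross-term estimate), and with the asymptotic $i\partial_y\bar\partial_y \log(r_a p_a) = (1+o(1))(N-a)\omega_M$, the integrand factors into a fibre piece times a base piece.

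Apply Fubini. The fibre integral is exactly what the two preceding propositions compute:
\[
\int_{\text{fibre}} \frac{1}{F_a(x)}\, i\partial_z\bar\partial_z \log F_a(x)
\;=\;
\begin{cases} 2\pi(1+\epsilon(k)) & a > 1,\\ \pi(1+\epsilon(k)) & a = 1, \end{cases}
\]
with the $U(1)$-invariance ensuring that the fibre constant is independent of the base point. The remaining base integral is
\[
\int_D \frac{|s_{a,i}|^2_{h_M}}{\rho_a}\cdot \frac{((N-a)\omega_M)^n}{n!}.
\]
Using $\rho_a = (N-a)^n\bigl(1 + \tfrac{S_M}{2(N-a)} + O(N^{-2})\bigr)$ and the orthonormality $\int_D |s_{a,i}|^2_{h_M}\,\omega_M^n/n! = 1$, this equals $1 + O(k^{-1})$. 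Multiplying the normalized fibre constant by the normalized base integral gives the stated values, with the factor $1/2$ in the $a=1$ case inherited directly from the fibre integral.

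The main obstacle is bookkeeping the error hierarchy: one must ensure that the $O(a/k)$, $O(\nu^3)$, and $O(1/N)$ errors accumulated across mass concentration, factorization of $\omega_{FS}^{n+1}$, the approximation of $i\partial_y\bar\partial_y\log(r_a p_a)$, and the three-terms reduction for $F_a$ all remain subordinate to the claimed $O(k^{-1})$ bound, uniformly in $i$ and across the range $a = O(\sqrt{k}/\log k)$. The circle symmetry of the metric and the orthogonality $(\bar{s}_{a,1},\bar{s}_{a,2})_h = 0$ are what keep the computation purely a product of a fibre factor and a base factor, which is why the error never blows up when summed.
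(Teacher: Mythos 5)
Your argument is essentially identical to the paper's: the same reduction to the disc bundle and $\rho_{in}$, the same cross-term estimate and three-terms lemma to factor $\omega_{FS}^{n+1}$ into a fibre piece times a base piece, the same two fibre-integral propositions giving $2\pi$ (resp.\ $\pi$ for $a=1$), and the same base integral over $D$ closed out by the Bergman kernel expansion of $\rho_a$ and the orthonormality of $s_{a,i}$. The one imprecision to flag is that you only state $i\partial_y\bar\partial_y\log(r_a p_a) = (1+o(1))(N-a)\omega_M$, which raised to the $n$-th power produces an $o(1)$ rather than $O(k^{-1})$ error; in the inside regime $a = O(\sqrt{k}/\log k)$ the three-terms reduction actually gives the sharper $i\partial_y\bar\partial_y\log r_a = O(1)\omega_M$, hence the paper's $\omega_{FS}^{n+1} = (1+O(1/k))(n+1)\, i\partial_z\bar\partial_z\log r_a\wedge((N-a)\omega_M)^n$, which is the bound your final bookkeeping needs and which should replace the $(1+o(1))$ factor in your write-up.
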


\subsection{neck}
By "neck", we mean when $a=O(\sqrt{k}\log k)$ and $a>C\frac{\sqrt{k}}{\log k}$ for some $C>0$.

We need to calculate the first several terms of $\log I_a$, as a function of $a$, at a point $a_0$.
For simplicity, in the following formula we will not write it but for every derivative of $I$ on the right hand side of the equation, we mean that it is evaluated at $a_0$.
\begin{eqnarray*}
\log I_a &=&\log I + c_1(a-a_0)+c_2(a-a_0)^2+c_3(a-a_0)^3\\
&+&c_4(a-a_0)^4+O((a-a_0)^5)
\end{eqnarray*}
where
\begin{eqnarray*}
c_1&=&\frac{I'}{I}\\
c_2&=&\frac{-(I')^2+II''}{2I^2}\\
c_3&=&\frac{1}{6I^3}[2(I')^3-3II''I'+I^2I^{(3)}]\\
c_4&=&\frac{1}{24I^4}[-6(I')^4+12I(I')^2I''-3I^2(I'')^2-4I'I^2I^{(3)}+I^3I^{(4)}]
\end{eqnarray*}

We have $$I^{(m)}=(-2)^m\int t^me^{E_a}dt$$
\begin{rem}
 As before, the integrals can be considered as over an $\epsilon_t$-neighborhood of $t_a$. So we omitted the limits to have some flexibility.
\end{rem}
We will play two small tricks here.

First, we can absorb the $I$'s in the denominators into the integrals of $I^{(m)}$, namely 
$$\frac{I^{(m)}}{I}=(-2)^m\int t^m \frac{e^{E}}{I}dt$$
We will write $d\mu=\frac{e^{E}}{I}dt$, which is so a probability measure, namely of total measure $1$.

Second, since $d\mu$ is centered at $t_a$, we shift it to be centered at $0$, namely, we let $\bar{t}=t-t_a$, then 
we have  
\begin{eqnarray*}
c_1&=&-2(t_a+i_1)\\
c_2&=&2(i_2-i_1^2)\\
c_3&=&\frac{-4}{3}[2i_1^3-3i_2i_1+i_3]\\
c_4&=&\frac{2}{3}[-6i_1^4+12i_1^2i_2-3i_2^2-4i_1i_3+i_4]
\end{eqnarray*}
where we use the notation $i_m=\int \bar{t}^md\mu$, and the new $d\mu$ is centered at $0$.
We need to estimate the $i_m$'s now.
The following standard Gaussian integrals  will be used.
\begin{lem}
\begin{eqnarray*}
\int_{-\infty}^{\infty}x^2e^{-bx^2}\sqrt{\frac{b}{\pi}}dx&=&\frac{1}{2b}\\
\int_{-\infty}^{\infty}x^4e^{-bx^2}\sqrt{\frac{b}{\pi}}dx&=&\frac{3}{4b^2}\\
\int_{-\infty}^{\infty}x^6e^{-bx^2}\sqrt{\frac{b}{\pi}}dx&=&\frac{15}{8b^3}\\
\int_{-\infty}^{\infty}x^8e^{-bx^2}\sqrt{\frac{b}{\pi}}dx&=&\frac{105}{16b^4}
\end{eqnarray*}
\end{lem}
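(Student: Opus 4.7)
The plan is to deduce all four identities from the single base integral $\int_{-\infty}^{\infty}e^{-bx^2}dx=\sqrt{\pi/b}$, valid for $b>0$. First I would establish this base case by the standard polar-coordinates trick: square the integral, view it as a double integral over $\mathbb{R}^2$, and integrate in polar coordinates to evaluate the resulting radial integral explicitly. The factor $\sqrt{b/\pi}$ in the lemma is then just the normalization making $\sqrt{b/\pi}\,e^{-bx^2}$ a probability density on $\mathbb{R}$, so all four identities are moment computations for a centered Gaussian.

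Next I would observe that since $x^{2m}e^{-bx^2}$ is dominated, uniformly on compact subsets of $\{b>0\}$, by an integrable function, one may differentiate under the integral sign to obtain
$$\int_{-\infty}^{\infty}x^{2m}e^{-bx^2}dx=(-1)^m\frac{d^m}{db^m}\int_{-\infty}^{\infty}e^{-bx^2}dx=(-1)^m\sqrt{\pi}\,\frac{d^m}{db^m}b^{-1/2}.$$
A direct induction gives $\frac{d^m}{db^m}b^{-1/2}=(-1)^m\frac{(2m-1)!!}{2^m}b^{-1/2-m}$, so after multiplying by $\sqrt{b/\pi}$ one gets the closed form $(2m-1)!!/(2b)^m$. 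Plugging in $m=1,2,3,4$ gives $1/(2b)$, $3/(4b^2)$, $15/(8b^3)$, and $105/(16b^4)$, matching the four lines of the lemma.

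As an equally quick alternative, I would note that integration by parts on $\int x^{2m-1}\cdot xe^{-bx^2}dx$, using $\int xe^{-bx^2}dx=-\frac{1}{2b}e^{-bx^2}$, yields the recursion $\int_{-\infty}^{\infty}x^{2m}e^{-bx^2}dx=\frac{2m-1}{2b}\int_{-\infty}^{\infty}x^{2m-2}e^{-bx^2}dx$, which generates the same double-factorial pattern starting from $\sqrt{\pi/b}$. There is essentially no real obstacle here — the only point of care is justifying the interchange of $\frac{d}{db}$ and $\int$, which is immediate from the dominated convergence theorem applied on any compact $b$-interval bounded away from $0$. The lemma is collected in this form purely for convenient reference in the subsequent Laplace-method estimates of the moments $i_m=\int \bar{t}^m d\mu$.
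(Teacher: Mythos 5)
The paper states these as ``standard Gaussian integrals'' without proof, so there is no paper argument to compare against. Your derivation is correct: the base integral $\int_{-\infty}^{\infty}e^{-bx^2}dx=\sqrt{\pi/b}$, differentiation under the integral sign (justified by dominated convergence on compact $b$-intervals away from $0$), and the closed form $(2m-1)!!/(2b)^m$ all check out, giving $\tfrac{1}{2b}$, $\tfrac{3}{4b^2}$, $\tfrac{15}{8b^3}$, $\tfrac{105}{16b^4}$ for $m=1,2,3,4$; the integration-by-parts recursion is an equally valid alternative. Your proposal correctly supplies a standard proof of a fact the paper simply cites.
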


The integrals of $i_m$ are from $-\epsilon_t$ to $\epsilon_t$, so we can use the Taylor series of $\bar{E_a}(\bar{t})=E_a(\bar{t}+t_a)$ to estimate the integrals. By our estimations of the derivatives of $E_a(t)$, we know that the dominating term is $\frac{E_a''(t_a)}{2}\bar{t}^2$. Because of the symmetry of this dominating term and the domain of the integrals, it is immediate to see that when $m$ is even $i_m$ is small compared to the case when $m$ is even. More precisely, in order to estimate $i_1$ and $i_3$, we have to use the degree three term in the Taylor series of $\bar{E_a}(\bar{t})$. Denoting $\frac{-E_a''(t_a)}{2}$ by $b$, we have
\begin{eqnarray*}
i_1&=&(1+O(\frac{1}{k}))C\frac{a^3}{k^2}\int_{-\infty}^{\infty}x^4e^{-bx^2}\sqrt{\frac{b}{\pi}}dx\\
 &=&(1+O(\frac{1}{k}))C\frac{a^3}{k^2}O(\frac{k^2}{a^4})\\
  &=& O(\frac{1}{a})
\end{eqnarray*}
\begin{eqnarray*}
i_3&=&(1+O(\frac{1}{k}))C\frac{a^3}{k^2}\int_{-\infty}^{\infty}x^6e^{-bx^2}\sqrt{\frac{b}{\pi}}dx
\\ &=&(1+O(\frac{1}{k}))C\frac{a^3}{k^2}O(\frac{k^3}{a^6})
\\ &=&O(\frac{k}{a^3})
\end{eqnarray*}
For $i_2$ and $i_4$ we only need to use the second degree term.
\begin{eqnarray*}
i_2&= &(1+O(\frac{a^4}{k^3}))\int_{-\infty}^{\infty}x^2e^{-bx^2}\sqrt{\frac{b}{\pi}}dx\\
 &=&(1+O(\frac{a^4}{k^3}))\frac{1}{2b}
\end{eqnarray*}
\begin{eqnarray*}
i_4&= & (1+O(\frac{a^4}{k^3}))\int_{-\infty}^{\infty}x^4e^{-bx^2}\sqrt{\frac{b}{\pi}}dx\\
 &=&(1+O(\frac{a^4}{k^3}))\frac{3}{4b^2}
\end{eqnarray*}
With these estimations, we can estimate
\begin{eqnarray}
c_1&=&(-2+O(\frac{1}{k}))t_a\\
c_2&=&(1+O(\frac{a^4}{k^3}+\frac{1}{k}))\frac{2}{-E_a''(t_a)}\\
c_3&=&O(\frac{k}{a^3})
\end{eqnarray}
For $c_4$, one easily sees that some terms are small:
$$-6i_1^4+12i_1^2i_2-4i_1i_3=O(\frac{1}{a^4}+\frac{k}{a^4}+\frac{k}{a^4})=O(\frac{k}{a^4})$$ 
But the two bigger terms cancel out
$$-3i_2^2+i_4=O(\frac{a^4}{k^3})O(\frac{k^2}{a^4})=O(\frac{1}{k})$$
Therefore, we have 
\begin{equation}
c_4=O(\frac{1}{k}+\frac{k}{a^4})
\end{equation}

With the preceding estimamtions, the remaining of the estimations for $\mu_{a,i}$ is exactly the same as that in one dimension \cite{SunSun}. Let us just sketch the steps.

We let $u=\log x$, then $f_a(u)=F_a(x)=\sum e^{\log q_c+cu}$. Clearly $f_a(u)$ is then a convex function, and has one minimum at $u_a\approx 2t_a$. Then we shift $u$ to make the minimum at $0$. Since $\log q_c\approx -\log I_{a+c}+\log I_a$, the Taylor series of $q_c$ is just the negative of that of $I_{a+c}$ at $a$. So $\log q_c$ is a concave function of $c$. After the shifting of $u$, the first order term of  $\log q_c$ is cancelled out, so  $\log q_c$ now achieve its maximum at $c=0$. 

Then we realize that $u$ only need to be restricted to $(-(\log k)^2,(\log k)^2)$. So then we only need to consider $|c|<(\log k)^5$. In particular 
$$i\py\bpy\log r_a=O((\log k)^5)\omega_M$$
So \begin{equation}\label{fubini-study2}
\py\bpy\log (r_ap_a)=(1+O(\frac{(\log k)^5}{k}))\py\bpy\log p_a
\end{equation}
Then the key argument is that because of the symmetry, the degree three term makes smaller contribution to the integral than the degree four term does. But we have $c_4=O(\frac{(\log k)^4}{k}$. 

All in all, we can use the standard function $$h_a(u)=\sum_{c\in \Z}e^{-\frac{kc^2}{2a^2}}e^{-cu}$$ to estimate the integral. Therefore we have the theorem 
\begin{theorem}
$$\mu_{a,i}=1+O(\frac{(\log k)^{60}}{k})$$.
\end{theorem}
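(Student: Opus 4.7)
My plan is to extend the fiber-integral analysis from the Inside subsection to the neck range $C\sqrt{k}/\log k<a<\sqrt{k}\log k$, replacing the three-term truncation of $F_a(x)=r_a/x^{a-1}$ by the full discrete-Gaussian model borrowed from the one-dimensional case \cite{SunSun}. After the standard factorization $\omega_{FS}^{n+1}=(1+\text{err})(n+1)\,\pz\bpz\log r_a\wedge(\py\bpy\log(r_ap_a))^n$, and integration over $D$ (which is handled as before via the asymptotic constancy of $\rho_a$), the task reduces to proving
$$\frac{1}{2\pi}\int_{\C}\frac{1}{F_a(x)}\,i\pz\bpz\log F_a(x)=1+O\bigl((\log k)^{60}/k\bigr).$$

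The first concrete step is to Taylor expand $\log I_a$ at $a_0=a$ and read off $c_1,\ldots,c_4$ as polynomial combinations of the centered moments $i_m=\int \bar{t}^{m}\,d\mu$ of the probability measure $d\mu=e^{E_a}/I_a\,dt$ about $t_a$. Using the estimates $-E_a''(t_a)/2\sim a^2/k$, $E_a^{(3)}=O(a^3/k^2)$ and $E_a^{(4)}=O(a^4/k^3)$, I would approximate each $i_m$ by inserting the appropriate number of Taylor terms of $\bar E_a$ into a Gaussian integral, obtaining $i_2\sim k/a^2$, $i_4\sim k^2/a^4$, $i_1=O(1/a)$, $i_3=O(k/a^3)$. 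The key output is $c_2=(1+\epsilon)\cdot 2/(-E_a''(t_a))$, $c_3=O(k/a^3)$, and, crucially, $c_4=O(1/k+k/a^4)$ after an exact cancellation between $-3i_2^2$ and $i_4$.

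Passing to $u=\log x$ and recentering so that $f_a(u):=F_a(x)$ attains its minimum at $u=0$, convexity of $f_a$ together with concavity of $c\mapsto\log q_c$ allow me to truncate to $|u|<(\log k)^2$ and $|c|<(\log k)^5$ with an $\epsilon(k)$ tail error. Inside this window one has $i\py\bpy\log r_a=O((\log k)^5)\,\omega_M$, so using \eqref{fubini-study1} and the mixed-term lemma from the Inside subsection I may replace $(\py\bpy\log(r_ap_a))^n$ by $(\py\bpy\log p_a)^n$ at the cost of a multiplicative factor $1+O((\log k)^5/k)$, reducing the problem to a purely one-variable fiber integral against a probability-type measure on $u$.

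The final step is to compare $F_a$ with the standardized discrete Gaussian $h_a(u)=\sum_{c\in\Z}e^{-kc^2/(2a^2)}e^{-cu}$ from \cite{SunSun}: the quadratic coefficient $c_2$ agrees with the width $a^2/k$ to leading order, the symmetric measure $du/h_a$ kills the $c_3$-contribution at leading order, and only the $c_4$-error propagates. Tracking the logarithmic factors through the truncation window, the Gaussian moment ratios, and the bound $c_4=O(k/a^4)=O((\log k)^4/k)$ in the neck range produces the advertised $O((\log k)^{60}/k)$. The main obstacle, and the step I expect to consume most of the effort, is the careful bookkeeping showing that the symmetry-based cancellation of the cubic term survives once $F_a$ is a genuine infinite series rather than a three-term polynomial; this is precisely the one-dimensional argument of \cite{SunSun}, which transports verbatim once the two-variable factorization of $\omega_{FS}^{n+1}$ above has localized everything to a single fiber.
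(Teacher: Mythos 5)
Your proposal follows the paper's argument essentially step by step: Taylor-expand $\log I_a$ via the centered moments $i_m$, exploit the cancellation $-3i_2^2+i_4=O(1/k)$ to get $c_4=O(1/k+k/a^4)$, localize the fiber integral via the factorization of $\omega_{FS}^{n+1}$ and the window truncation $|u|<(\log k)^2$, $|c|<(\log k)^5$, and finally compare with the discrete Gaussian $h_a(u)=\sum_c e^{-kc^2/(2a^2)}e^{-cu}$ imported from the one-dimensional case. This is the same approach as the paper (which itself outsources the final comparison to \cite{SunSun}); the only slip is the informal claim that ``$c_2$ agrees with the width $a^2/k$''---in fact $c_2\sim k/a^2$, the inverse of that variance---but this does not affect the argument.
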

Again, one can make the exponent $60$ smaller if one wish to.

\subsection{others}
For bigger $a$, since the mass of $\bar{s}_{a,i}$ lies outside $D_r$, we can use the Bergman kernel expansion 
$$\bar{\rho}=k^{n+1	}(1+\frac{c_0}{2}\frac{1}{k}+O(\frac{1}{k^2}))$$ 
to see that $\omega_{FS}=k\omega_\phi+O(\frac{1}{k^2})$. And then
\begin{eqnarray*}
\mu_{a,i}&=&\int \frac{|\bar{s}_{a,i}|^2_h}{\bar{\rho}}\frac{\omega_{FS}^{n+1}}{(n+1)!}\\
&=&\int |\bar{s}_{a,i}|^2_h\frac{(1+O(\frac{1}{k^2}))k^{n+1}}{k^{n+1	}(1+\frac{c_0}{2}\frac{1}{k}+O(\frac{1}{k^2}))}\frac{\omega_{\phi}^{n+1}}{(n+1)!}\\
&=&1-\frac{c_0}{2}\frac{1}{k}+O(\frac{1}{k^2})
\end{eqnarray*}

We then look at the off-diagonal entries. For simplicity, we write $\alpha=\{ a,i\}$ and $\beta=\{ b,j\}$, $\alpha\neq \beta$. 
Because of equations \ref{fubini-study1} and \ref{fubini-study2}, we see that $$\mu_{\alpha,\beta}=O(\frac{1}{k^2})$$ 
\subsection{Balancing Energy}
Now it is easy to estimate the balancing energy with parameter $\lambda=\frac{2}{3}$, $$|\underline{\mu}|^2_{2}$$
where $$\underline{\mu}=\mu_{\hat{L}}+\frac{1}{2}\mu_D-\frac{\vol \hat{L}+\frac{1}{2}\vol D}{\dim \hcal_k}I$$. 
\begin{theorem}
$$|\underline{\mu}|^2_{2}=O(\frac{(\log k)^{121}}{k^{1/2}})$$
\end{theorem}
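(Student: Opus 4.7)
The plan is to decompose $\underline\mu$ along the $S^1$-isotypic decomposition $\hcal_k=\bigoplus_a H^0(D,kA-aD)$, estimate the diagonal entries in the three regimes (inside, neck, outside) established in Section 5, and sum the resulting squared errors. By the rotation invariance of both $\omega_\phi^{n+1}$ and $\omega_{FS}^{n+1}$, and the fact that $\bar s_{a,i}$ transforms by the character $e^{ia\theta}$, both $\mu_{\hat L}$ and $\mu_D$ vanish between different $a$-blocks. Moreover $\mu_D$ is supported only on the $a=1$ block: each $\bar s_{a,i}$ vanishes along $D$ to order exactly $a$, so after factoring out $s_D$ the induced embedding of $D$ has nontrivial image only in the $a=1$ factor, where it reduces to the Kodaira embedding of $D$ by $H^0(D,(N-1)L)$. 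After simultaneously diagonalizing each $a$-block with an $S^1$-equivariant basis, it suffices to estimate the diagonal entries plus a small off-diagonal remainder.

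The normalizing constant $c:=(\vol \hat L+\tfrac12\vol D)/\dim\hcal_k$ has been computed in Section 3 to be $1+\sigma/k+O(1/k^2)=1-c_0/(2k)+O(1/k^2)$, using the lemma $c_0/2=-\sigma$. Assembling $\underline\mu_{(a,i)}=\mu_{a,i}+\tfrac12\mu_D^{(a,i),(a,i)}-c$ regime by regime: for $a=1$, combine $\mu_{1,i}=\tfrac12+O(1/k)$ with the standard Tian-Zelditch-Lu expansion on $(D,L)$ giving $\mu_D^{(1,i),(1,i)}=1+O(1/k)$; the three terms cancel at leading order, leaving $O(1/k)$. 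For $2\le a=O(\sqrt k/\log k)$ (inside), $\mu_{a,i}-c=O(1/k)$. For $a$ in the neck range $a=O(\sqrt k\log k)$, $\mu_{a,i}-c=O((\log k)^{60}/k)$. For larger $a$ (outside), $\mu_{a,i}-c=O(1/k^2)$, since the mass of $\bar s_{a,i}$ sits in the region where the compact Bergman expansion gives $\bar\rho=k^{n+1}(1-c_0/(2k)+O(1/k^2))$, matching $c$ to order $1/k$.

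Summing squared diagonal entries using $\dim H^0(D,kA-aD)=O(k^n)$ and the lengths of the three regimes yields $O(\sqrt k/\log k\cdot k^n\cdot k^{-2})$, $O(\sqrt k\log k\cdot k^n\cdot(\log k)^{120}\cdot k^{-2})$, and $O(k\cdot k^n\cdot k^{-4})$, that is $O(k^{n-3/2}/\log k)$, $O(k^{n-3/2}(\log k)^{121})$, and $O(k^{n-3})$ respectively. The neck dominates; the off-diagonal entries are all $O(1/k^2)$ and contribute $O(k^{2n-2})$, which is absorbed. This produces the $O(k^{n-3/2}(\log k)^{121})$ bound claimed in Theorem \ref{thm1-1}, which specializes to the stated $(\log k)^{121}/k^{1/2}$ in the base dimension.

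The principal obstacle is the neck estimate $\mu_{a,i}-c=O((\log k)^{60}/k)$. This requires propagating the three-term truncation of the ratio series $q_c=I_a/I_{a+c}$ out to $|c|\le(\log k)^5$, balancing the cubic and quartic Taylor coefficients of $\log I_a$ in $a$ (whose dominant quartic parts cancel, giving $c_4=O(1/k+k/a^4)$), and using the symmetry of the resulting convex profile $f_a(u)$ to beat the degree-three error down below the degree-four one, as in the one-dimensional argument of \cite{SunSun}. The exponent $60$, and hence $121=2\cdot 60+1$ in the final bound, is forced by the conservative logarithmic cutoffs used to truncate the Laplace integrals and is certainly not sharp.
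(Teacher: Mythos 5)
Your proof follows essentially the same approach as the paper: decompose $\hcal_k$ along the $S^1$-isotypic blocks $H^0(D,kA-aD)$, estimate the diagonal deviations $\mu_{a,i}-c$ in the inside/neck/outside regimes (with the $a=1$ block cancelling against $\frac{1}{2}\mu_D$ and the normalizing constant $c=1-c_0/(2k)+O(1/k^2)$), and sum the squared deviations, with the neck regime dominating. The diagonal bookkeeping and the resulting bound $O(k^{n-3/2}(\log k)^{121})$ match the paper's one-line computation.

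The gap is in the off-diagonal accounting. You correctly observe that $S^1$-equivariance kills $\mu_{(a,i),(b,j)}$ whenever $a\neq b$, so only within-block off-diagonal entries survive. That gives $O(k)\cdot O(k^{2n})=O(k^{2n+1})$ entries, hence a squared-norm contribution $O(k^{2n+1}\cdot k^{-4})=O(k^{2n-3})$, whereas your stated $O(k^{2n-2})$ counts all $\sim(k^{n+1})^2$ index pairs, contradicting the invariance argument you just made. More importantly, neither figure is actually dominated by the neck term in general: $O(k^{2n-2})$ exceeds $O(k^{n-3/2}(\log k)^{121})$ for every $n\geq 1$, and even the corrected $O(k^{2n-3})$ exceeds it once $n\geq 2$; it is dominated only at $n=1$. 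So the assertion ``which is absorbed'' is false as written. The paper's own final summation silently drops the off-diagonal contribution, so the issue is latent in the source as well; but to close the argument you must either use the corrected within-block count and restrict to $n=1$ (which is the form in which the boxed statement is phrased), or establish that the within-block off-diagonal entries decay strictly faster than $O(1/k^2)$ --- say to $\epsilon(k)$ --- which would require a sharper Bergman-kernel estimate on the CSCK base $D$ than the paper records.
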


\begin{proof}
$	O(\frac{(\log k)^{120}}{k^2})k^n\sqrt{k}\log k+O(\frac{1}{k^4})k^{n+1}=O(\frac{(\log k)^{120}}{k^{3/2-n}})$
\end{proof}

So we have proved our main theorem \ref*{thm1-1}.
\begin{cor}
	$(\hat{L},D,cA,0)$ is K-semistable.
\end{cor}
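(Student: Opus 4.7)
The plan is to derive the corollary directly from Theorem \ref{thm1-1} together with the implication already recorded in the introduction: by \cite{ssun}, if $(X,D,L)$ is $\lambda$-almost asymptotically Chow stable, then $(X,D,L,\beta)$ is K-semistable with $\beta=\tfrac{3\lambda-2}{\lambda}$. Since the corollary concerns the parameter $\beta=0$, the only thing to check is that the special value $\lambda=\tfrac{2}{3}$ appearing in Theorem \ref{thm1-1} matches $\beta=0$ under this formula.

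First, I would solve $\tfrac{3\lambda-2}{\lambda}=0$, which gives $\lambda=\tfrac{2}{3}$, so this is precisely the value of $\lambda$ for which the main theorem is stated. The main theorem provides the quantitative estimate
\[
\|\mu(\Phi_k(\hat{L}),\Phi_k(D),\tfrac{2}{3})\|_2^2=O(k^{-3/2+n}(\log k)^{121}),
\]
so in particular the balancing energy $E(\Phi_k(\hat{L}),\Phi_k(D),\tfrac{2}{3})=o(k^{-1})$ for every $n$, since one may divide through by the ambient scale (the relevant normalization of the moment map is explained in \cite{ssun}). This is the definition of $\tfrac{2}{3}$-almost asymptotic Chow stability for $(\hat{L},D,A)$.

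Next I would invoke the implication from \cite{ssun} (formula 2.2 there) with $\lambda=\tfrac{2}{3}$ and $\beta=0$. Since the polarization appearing in the corollary is $cA$ with $c$ the positive integer chosen so that $cA$ is an integral ample line bundle (cf.\ Section \ref{sec2}), and since Chow/K-stability are invariant under positive rational rescaling of the polarization, the $\tfrac{2}{3}$-almost asymptotic Chow stability of $(\hat{L},D,A)$ implies the same for $(\hat{L},D,cA)$, hence the K-semistability of $(\hat{L},D,cA,0)$.

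There is no real obstacle here: the entire analytic and geometric content is carried by Theorem \ref{thm1-1}, and the corollary is a bookkeeping consequence of matching $\lambda=\tfrac{2}{3}$ to $\beta=0$ in the formula relating almost balanced embeddings to log K-semistability from the authors' previous paper \cite{ssun}. The only point worth emphasizing in the write-up is that the specific exponent $\tfrac{2}{3}$ chosen for the center of mass in Theorem \ref{thm1-1} was not arbitrary but was forced precisely by the requirement $\beta=0$ in the Futaki-type formula of \cite{ssun}.
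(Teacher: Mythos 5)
Your proposal is essentially the paper's own argument, and it is correct. The paper's proof simply writes ``We use directly the formula 2.2 in \cite{ssun}'' and then plugs the quantitative estimate from Theorem \ref{thm1-1} into the Futaki-invariant lower bound, checking that
$$k^{-n-1}\cdot k^{(n+3)/2}\cdot k^{-(3/4-n/2)}(\log k)^{121}\longrightarrow 0,$$
so that $F\geq 0$. You phrase the same step through the intermediate notion of $\lambda$-almost asymptotic Chow stability and the identity $\beta=\tfrac{3\lambda-2}{\lambda}$, which solves to $\lambda=\tfrac23$ when $\beta=0$; both versions are the same appeal to formula 2.2 of \cite{ssun}, and your observation that rescaling $A\mapsto cA$ does not affect the conclusion is also implicit in the paper. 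The one place where your write-up should be tightened is the sentence claiming that the estimate gives $E=o(k^{-1})$ for every $n$: taken literally, $\|\mu\|_2=O(k^{-3/4+n/2}(\log k)^{61})$ is \emph{not} $o(k^{-1})$ once $n\geq 1$. What actually saves the argument, as the paper's displayed limit makes explicit, is the $n$-dependent normalization built into formula 2.2 (the factors $k^{-n-1}k^{(n+3)/2}$), which absorbs the extra powers of $k$ so that the net exponent is $-1/4<0$. You gesture at this by mentioning the ambient scale, but in a final write-up you should either invoke formula 2.2 directly with the explicit exponents (as the paper does) or make the normalization of $E$ precise rather than asserting the raw estimate is $o(k^{-1})$.
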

\begin{proof}
	We use directly the formula 2.2 in \cite{ssun}
	The Futaki invariant of any test configuration satisfies
	$$F\geq \lim_{k\to \infty}k^{-n-1}k^{(n+3)/2}\frac{(\log k)^{121}}{k^{3/4-n/2}}=0$$
\end{proof}

\bibliographystyle{plain}

\bibliography{references}

\end{document}